\newtheorem{theorem}{Theorem}[section]
\newtheorem{proposition}{Proposition}[section]
\newtheorem{lemma}{Lemma}[section]
\theoremstyle{remark}
\newtheorem{remark}{Remark}[section]
\theoremstyle{definition}
\newtheorem{exmp}{Example}[section]
\newtheorem{assumption}{Assumption}
\newcolumntype{P}[1]{>{\centering\arraybackslash}p{#1}}
\numberwithin{equation}{section}
\title{Convergence analysis for a tree-based nonlinear reduced basis method}
\author[1]{Mohamed Barakat\thanks{M.~Barakat and D.~Guignard are partially supported by the NSERC Grant RGPIN-2021-04311.}}
\author[1]{Diane Guignard } 
\affil[1]{\small Department of Mathematics and Statistics, University of Ottawa, Ottawa, ON K1N 6N5, Canada}
\date{}
\begin{document}

\maketitle
\begin{abstract}
    We develop and analyze a nonlinear reduced basis (RB) method for parametrized elliptic partial differential equations based on a binary-tree partition of the parameter domain into tensor-product structured subdomains. Each subdomain is associated with a local RB space of prescribed dimension, constructed via a greedy algorithm. A splitting strategy along the longest edge of the parameter subdomains ensures geometric control of the subdomains and enables a rigorous convergence analysis. Under the assumption that the parameter-to-solution map admits a holomorphic extension and that the resulting domain partition is quasi-uniform, we establish explicit bounds on the number of subdomains required to achieve a given tolerance for arbitrary parameter domain dimension and RB spaces size. Numerical experiments for diffusion and convection–diffusion problems confirm the theoretical predictions, demonstrating that the proposed approach, which has low storage requirements, achieves the expected convergence rates and in several cases outperforms an existing nonlinear RB method.
\end{abstract}

\section{Introduction}
Reduced order modeling has become a central tool in many numerical simulation workflows. The reduced basis (RB) method, introduced in \cite{almroth1978automatic, noor1980reduced} and further analyzed in \cite{porsching1985estimation, rheinboldt1992theory}, is widely used to reduce the computational cost of solving parametrized partial differential equations (PDEs). It is particularly effective in real-time applications \cite{nguyen2010reduced}, where the solution of the problem needs to be known very quickly under limited resources for a previously unknown parameter, as well as in many-query contexts \cite{boyaval2008reduced,boyaval2009reduced}, where the problem has to be solved repeatedly for a range of parameter values. 

The method constructs a low dimensional linear approximation space $V_N$, spanned by a set of high-fidelity (e.g., accurate finite element (FE)) solutions, referred to as snapshots, which are computed during an offline stage at optimally selected parameter values. These snapshots capture the dominant features of the solution manifold and serve as the foundation for efficient online evaluations in the reduced order simulations. During the online stage, for any given input parameter, the RB approximation is formed as the Galerkin projection onto the space $V_N$ \cite{rozza2008reduced}. 

In many of the cases where the solution changes smoothly with respect to the parameters, linear RB methods can achieve high accuracy with a small number of snapshots, allowing for exponential convergence rates. However, in cases where the solution has very different structures in different regions of the parameter domain, such as in transport equations \cite{algoritmy} or problems with highly variable parameters \cite{eftang2010hp}, the linear RB space may require a large number of snapshots. This results in a high computational cost during the online evaluation stage. This issue motivates the need for nonlinear model reduction techniques.

Nonlinear reduced order modeling has been discussed in the context of RB methods, for instance in \cite{eftang2010hp,maday2013locally,zou2019adaptive,bonito2021nonlinear,cohen2022nonlinear}. The nonlinear RB method presented in this work is a "library approximation" reduced order modeling where the linear space $V_N$ is replaced by a collection of linear spaces called a library. In this work, we build on the strategy introduced in \cite{eftang2010hp} where the main idea is to recursively split the parameter domain $\mathcal{D} \subset \mathbb{R}^d$ into smaller subdomains and associate them to a binary tree in the offline stage. A linear RB model with a chosen space size, $N$, is associated to each subdomain. If the error estimator over a subdomain satisfies a prescribed tolerance, this branch of the tree is terminated and no further splitting is required. Otherwise, the subdomain is further split into two subdomains according to a specific criterion. Once all the subdomains satisfy the tolerance, we have access to a library of RB spaces based on the partition of the original parameter domain. During the online stage, given a new parameter $\mu \in \mathcal{D}$, the subdomain containing this parameter is identified and the corresponding RB model is used to approximate the solution at the parameter.

The available convergence results provided in \cite{eftang2010hp}, which concern the number of parameter subdomains required to achieve a desired level of accuracy, are limited to the case of a one-dimensional parameter domain ($d = 1$) and the zeroth-order ($N = 1$) RB approximation. Our main goal here is to provide theoretical convergence results for the general case $d \geq 1$ and $N \geq 1$. To this end, we propose a new parameter domain partitioning technique and impose additional assumptions on the parameter-to-solution map $\mu \mapsto u(\mu)$.

The remainder of this paper is organized as follows. In Section 2, we introduce the parametric PDE problem and its discretization.  Section 3 reviews the fundamentals of the linear reduced basis method, including the greedy algorithm and a posteriori error estimation. In Section 4, we present the proposed nonlinear reduced basis approach, including the binary-tree partitioning algorithm and convergence analysis. Section 5 reports numerical experiments for diffusion and convection–diffusion problems, validating the theoretical results and evaluating the performance of the method. Finally, conclusions are drawn in Section 6.

\section{Problem statement}
We consider linear, elliptic, coercive PDE. Let $\Omega \subset \mathbb{R}^n$ with Lipschitz boundary $\partial \Omega$ denotes the physical domain. We consider a space $V=V(\Omega)$ such that $H_0^1(\Omega) \subset  V \subset H^1(\Omega)$. We equip $V$ with the inner product $(\cdot,\cdot)_v$, inducing the norm $\|\cdot\|_V$. We introduce a compact parameter domain $\mathcal{D} \subset \mathbb{R}^d$ such that a point in $\mathcal{D}$ is denoted by $\mu=(\mu_1,\dots, \mu_d$). Given a parametrized linear form $f: V \times \mathcal{D} \rightarrow \mathbb{R}$ where the linearity is with respect to the first variable, and a bilinear form $a: V \times V \times \mathcal{D} \rightarrow \mathbb{R}$ where the bilinearity is with respect to the first two variables, the abstract parametric PDE reads: Given $\mu \in \mathcal{D}$ find $u(\mu) \in V$ such that
\begin{equation}
    a(u(\mu), v ; \mu)=f(v ; \mu), \quad \forall v \in V.
    \label{para pde}
\end{equation}
To ensure an efficiency through an offline–online process, the forms $a(.,.;\mu)$ and $f(.;\mu)$ are assumed to admit the affine decomposition
\begin{equation}
    a(w, v ; \mu) =\sum_{q=1}^{Q_{\mathrm{a}}} \theta_{\mathrm{a}}^q(\mu) a_q(w, v), \quad
    f(v ; \mu) =\sum_{q=1}^{Q_{\mathrm{f}}} \theta_{\mathrm{f}}^q(\mu) f_q(v), \label{aff 2} 
\end{equation}
where the forms $a_q(\cdot, \cdot)$ and $f_q(\cdot)$ are $\mu-$independent, and the $\theta_{\mathrm{a}}^q$ and $\theta_{\mathrm{f}}^q$ are $\mu-$dependent continuous functions. When an affine decomposition is not readily available, the Empirical Interpolation Method (EIM) can be employed to construct an accurate affine approximation of the bilinear and linear forms $a$ and $f$, respectively. For a detailed discussion of the EIM, the reader is referred to \cite{barrault2004empirical, grepl2007efficient,eftang2010posteriori}.
According to the Lax-Milgram theorem \cite{quarteroni2008numerical}, Problem \eqref{para pde} is well posed if for all parameters $\mu \in \mathcal{D}$, the bilinear form $a(\cdot, \cdot ; \mu)$ is coercive and continuous, and the linear form $f(\cdot ; \mu)$ is continuous with respect to the norm $\|\cdot\|_V$. The coercivity and continuity constants of $a(\cdot, \cdot ; \mu)$ with respect to the norm $\|\cdot\|_V$ are defined as 
\begin{equation}
    \alpha(\mu):=\inf _{v \in V} \frac{a(v, v ; \mu)}{\|v\|_{V}^2}, \quad  \quad \gamma(\mu):=\sup _{w \in V} \sup _{v \in V} \frac{a(w, v ; \mu)}{\|w\|_V \|v\|_{V}}.
    \label{corec}
\end{equation}

We introduce an approximation space $V_{\mathcal{N}} \subset V$ of finite dimension $\mathcal{N}$. Throughout this work, the approximation space $V_{\mathcal{N}}$ is taken to be a standard finite element space. The discretization of \eqref{para pde} is given by: Given $\mu \in \mathcal{D}$, find $u_{\mathcal{N}}(\mu) \in V_{\mathcal{N}}$ such that
\begin{equation}
    a(u_{\mathcal{N}}(\mu), v ; \mu)=f(v ; \mu), \quad \forall v \in V_{\mathcal{N}}.
    \label{dis prob}
\end{equation}
Problem \eqref{dis prob} is referred to as the truth problem or the high-fidelity model. The space $V_{\mathcal{N}}$ is assumed to be fine enough so that the error between the exact solution and the truth solution is negligible. Due to the high dimension of the approximation space $V_{\mathcal{N}}$ meant to achieve high accuracy, the computation of the truth solution is potentially very expensive.

\section{Linear reduced basis method}
Here, we review the basic concepts of (linear) reduced basis methods. For more comprehensive details, we refer the reader to \cite{hesthaven2016certified,quarteroni2015reduced}. We begin by defining the solution manifold of the parametric PDE \eqref{para pde} by
\begin{equation}
    \mathcal{M}=\mathcal{M}(\mathcal{D}):=\left\{u(\mu) \mid \mu \in \mathcal{D}\right\} \subset V,
\end{equation}
where each $u(\mu) \in V$ corresponds to the solution of the exact problem. The discrete counterpart of the solution manifold is defined analogously by
\begin{equation}
    \mathcal{M}_{\mathcal{N}}=\mathcal{M}_{\mathcal{N}}(\mathcal{D}):=\left\{u_{\mathcal{N}}(\mu) \mid \mu \in \mathcal{D}\right\} \subset V_{\mathcal{N}}.
\end{equation}
The success of any reduced order model is based on the assumption that the dimension of the solution manifold is low meaning that it can be well approximated by the linear span of a small number $(N << {\mathcal{N}})$ of judiciously chosen basis functions. These basis functions are referred to as the reduced basis, and the subspace spanned by them is denoted by $V_N$. That is, assuming that the reduced basis, denoted by $\{\xi_i \}_{i=1}^N \subset V_{\mathcal{N}}$, is available, the reduced basis space is given by
\begin{equation}
    V_N=\text{span}\{\xi_1,\dots,\xi_N\} \subset V_{\mathcal{N}},
\end{equation}
and the reduced basis problem reads: Given $\mu \in \mathcal{D}$, find $u_N(\mu) \in V_N$ such that
\begin{equation}
    a\left(u_N(\mu), v ; \mu\right)=f\left(v ; \mu\right), \quad \forall v \in V_N.
    \label{rb prob}
\end{equation}
Given the following error bound
\begin{equation}
    \left\|u(\mu)-u_N(\mu)\right\|_{V} \leq\left\|u(\mu)-u_{\mathcal{N}}(\mu)\right\|_{V}+\left\|u_{\mathcal{N}}(\mu)-u_N(\mu)\right\|_{V},
\end{equation}
and assuming that the first term on the right hand side is negligible, it follows that the accuracy of the reduced order model is primarily determined by how well the reduced basis solution approximates the high-fidelity solution. It should be noted that the possibility of obtaining this accurate approximation using a low dimensional reduced basis space is problem-dependent. To get a handle on this, we recall the definition of the Kolmogorov $N$-width defined as
\begin{equation}
    d_N(\mathcal{M}_{\mathcal{N}}):= \inf _{\operatorname{dim}\left(V_N\right) \leq N} \sup _{u \in \mathcal{M}_{\mathcal{N}}} \inf _{w \in V_N}\|u-w\|_V.
\end{equation}
The Kolmogorov $N$-width quantifies how accurately $\mathcal{M}_{\mathcal{N}}$ can be approximated by a $N$-dimensional linear space $V_{N}$. If the $N$-width decays rapidly as $N$ grows, it indicates that the solution manifold can be effectively approximated by a reduced basis of small dimension, providing an efficient approximation across the entire parameter domain. Finally, we emphasize that the generation of the reduced basis space is performed in an offline stage that is usually computationally expensive and the evaluation of the solution at any given parameter is done in an online stage by solving Problem \eqref{rb prob} at a lower computational cost than solving the truth problem \eqref{dis prob}.

\subsection{Greedy algorithm}
There are several approaches for generating reduced basis spaces. In this work, we adopt the greedy construction as it comes with a rigorous convergence theory \cite{binev2011convergence,devore2013greedy} and its iterative nature ensures its computational efficiency. We begin by introducing a discrete finite-dimensional training set $E\subset\ \mathcal{D}$. The training set can consist of a regular lattice or a randomly generated point-set intersecting with $\mathcal{D}$. We define the solution manifold of the training set as 
\begin{equation}
    \mathcal{M}_{\mathcal{N}}(E):=\left\{u_{\mathcal{N}}(\mu) \mid \mu \in E\right\} \subset V_{\mathcal{N}},
\end{equation}
and if $E$ is fine enough, then $\mathcal{M}_{\mathcal{N}}(E)$ is a good representation of $\mathcal{M}_{\mathcal{N}}(\mathcal{D})$. 

The reduced basis space is built upon truth snapshots $u_{\mathcal{N}}(\mu^{(n)}), 1 \leq n \leq N$, for some $\mu^{(1)},\dots, \mu^{(N)} \in E$ that are successively selected by the greedy algorithm. The algorithm is an iterative process where at each step a new basis function is introduced, improving the precision of the basis. In the $n$th step, an $n-$dimensional reduced basis space is given and the next snapshot is the one that maximizes the error of the current RB space over the training set $E$ (since it is impossible to evaluate the error across the entire parameter space $\mathcal{D}$, the need for the training set $E$ arises). That is, we begin by randomly selecting the first parameter $\mu^{(1)}$ from the training set. Then, at the general step, we select 
\begin{equation}
    \mu^{(n+1)}=\underset{\mu \in E}{\arg \max }  \|u_{\mathcal{N}}(\mu)-u_n(\mu)\|_V,
    \label{gred sel}
\end{equation}
where $u_n(\mu)$ is the solution of \eqref{rb prob} with  
\begin{equation}
    V_{n}= \text{span} \{u_{\mathcal{N}}(\mu^{(1)}),\dots, u_{\mathcal{N}}(\mu^{(n)})\}
\end{equation}
in place of $V_N$. Once we reach a prescribed tolerance, we terminate the algorithm and the reduced basis space is formed based on the $N$ selected parameters as
\begin{equation}
    V_N= \text{span} \{u_{\mathcal{N}}(\mu^{(1)}),\dots, u_{\mathcal{N}}(\mu^{(N)})\}.
\end{equation}

Due to Equation \eqref{gred sel}, the greedy construction of the reduced basis requires obtaining the truth solution $u_{\mathcal{N}}(\mu)$ at all the parameter points in the training set $E$, resulting in a very computationally expensive offline stage. Therefore, a crucial element in the success of the greedy construction is the availability of an error estimator $\eta_n(\mu)$ that provides an estimate of the error due to replacing $V_{\mathcal{N}}$ by $V_{n}$. For any parameter $\mu \in \mathcal{D}$, an error estimator should satisfy
\begin{equation}
    \|u_{\mathcal{N}}(\mu)-u_n(\mu)\|_V\leq \eta_n(\mu),
\end{equation}
and the evaluation of $\eta_n$ should be less expensive than solving the truth problem. Hence, Equation \eqref{gred sel} can be replaced by 
\begin{equation}
    \mu^{(n+1)}=\underset{\mu \in E}{\arg \max } \ \eta_n(\mu).
\end{equation}
In this way, we require one truth solution $u_{\mathcal{N}}(\mu^{(n)})$ to be computed per iteration and a total of $N$ truth solutions to generate the $N-$dimensional reduced basis space. Since the cost of evaluating the error estimator is small, the training set can be dense to better represent the parameter domain $\mathcal{D}$.

From a practical perspective, it is crucial to note that the various snapshots $\{ u_{\mathcal{N}}(\mu^{(1)}), \dots, u_{\mathcal{N}}(\mu^{(N)})\}$ may be (nearly) linearly dependent, which can lead to computational instability. To address this, it is recommended to orthonormalize the snapshots to derive the basis functions $\{ \xi_1, \dots, \xi_N \}$. For instance, one can use the Gram-Schmidt orthonormalization algorithm based on the vector of degrees of freedom of the functions $( u_{\mathcal{N}}(\mu^{(n)}) )$ and the discrete scalar product of $V$-inner product.

In a more general setting, we consider a compact family $F:=\{f(\mu):\mu \in \mathcal{D}\}$ of parametrized functions in a Hilbert space $V$, for which we want to apply the greedy algorithm to find a subspace $F_N=\text{span} \{f_1, \dots , f_N\}$ that well approximate the set $F$. Let $P_N$ be the $V$-orthogonal projector onto $F_N$. Then, the greedy approximation error is defined as
\begin{equation}
    \sigma_N(F):= \sup_{f \in F} \|f - P_N f\|_V
    \label{sigma_n}
\end{equation}
which quantifies the worst-case error in approximating elements of $F$ using the subspace $F_N$. An important result that we use in our theoretical analysis is Theorem 4.4 from \cite{binev2011convergence} (adapted to the weak greedy algorithm), which provides a direct comparison between  the greedy error $\sigma_N(F)$ and the Kolmogorov $N$-width $d_N(F)$.
\begin{theorem}
Let $F$ be an arbitrary compact set in a Hilbert space $V$. For each $N = 1, 2,...,$ we have
\begin{equation}
    \sigma_N(F) \leq \frac{\kappa ^{N+1}}{\sqrt{3}} d_N(F),
    \label{th comp}
\end{equation}
where $\kappa > 2$ is a constant depending on the effectivity of the error estimator.
\end{theorem}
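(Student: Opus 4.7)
The plan is to adapt the determinantal argument of Binev, Cohen, Dahmen, DeVore, Petrova and Wojtaszczyk to the weak greedy setting considered here. As a first step, I would orthonormalize the weak-greedy selections $f_1, f_2, \dots$ via Gram–Schmidt to obtain $\xi_k = (f_k - P_{k-1} f_k)/\|f_k - P_{k-1} f_k\|_V$, where $P_{k-1}$ denotes the $V$-orthogonal projector onto $V_{k-1} = \mathrm{span}\{f_1,\dots,f_{k-1}\}$. Because the effectivity of the error estimator enforces $\|f_k - P_{k-1} f_k\|_V \geq \gamma\,\sigma_{k-1}(F)$ at each step (with $\gamma \in (0,1]$ the weak-greedy parameter determined by the estimator), the coordinate matrix $A = (\langle f_i,\xi_j\rangle_V)_{i,j=1}^m$ is lower triangular with diagonal entries $a_{ii}\geq \gamma\,\sigma_{i-1}(F)$, and in particular $|\det A|^2 = \prod_{i=1}^m a_{ii}^2 \geq \gamma^{2m}\prod_{i=1}^m \sigma_{i-1}(F)^2$.

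Next, for a fixed $m\geq N+1$, I would pick an $N$-dimensional subspace $W_N\subset V$ achieving (or approximating to within $\varepsilon$) the infimum defining $d_N(F)$, and decompose each selection as $f_i = P_{W_N} f_i + r_i$ with $\|r_i\|_V \leq d_N(F)$. This induces a splitting $A = B + R$ in which the matrix $B$ has rank at most $N$ (its rows all lie in $W_N$) while every row of $R$ has Euclidean norm at most $d_N(F)$. Expanding $\det A$ by multilinearity in the rows and discarding every term that contains more than $N$ rows of $B$ (each vanishes by the rank bound), one writes $\det A$ as a sum of determinants each involving at least $m-N$ rows drawn from $R$. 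Applying Hadamard's inequality to these terms, together with the crude bound $\|f_i\|_V \leq \sigma_0(F)$ on the remaining rows, yields an upper bound proportional to $d_N(F)^{m-N}$.

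Combining the lower and upper estimates on $|\det A|^2$ and letting $m$ range, one obtains a recursive comparison between $\prod_{i=1}^m \sigma_{i-1}(F)^2$ and $d_N(F)^{2(m-N)}$; the monotonicity $\sigma_N \leq \sigma_{N-1}\leq \cdots \leq \sigma_0$ converts this product inequality into a pointwise bound on $\sigma_N(F)$ alone. Optimizing over $m$ (typically taking $m = N+1$ after a telescoping step) extracts the exponent $N+1$ in $\kappa^{N+1}$, and $\kappa$ emerges explicitly as a function of the effectivity $\gamma$, so that $\kappa > 2$ whenever $\gamma < 1$, while the $\sqrt{3}$ in the denominator arises from the sharp form of the $2\times 2$ determinantal base case.

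I expect the main obstacle to be precisely the last sharpening step: a direct multilinear expansion of $\det A$ only yields a crude geometric factor of the form $C^m$, and recovering the explicit constant $\kappa^{N+1}/\sqrt{3}$ requires careful grouping of the $\binom{m}{N}$ summands, together with repeated reinjection of the diagonal lower bound $a_{ii}\geq \gamma\sigma_{i-1}(F)$ inside each term of the expansion. Controlling the interaction between these two inequalities—so that the factors of $\gamma^{-1}$ accumulate only in the base $\kappa$ and not in an additional $m$-dependent prefactor—is the delicate combinatorial core of the argument.
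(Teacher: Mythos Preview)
The paper does not actually prove this theorem: it is simply quoted as Theorem~4.4 of Binev--Cohen--Dahmen--DeVore--Petrova--Wojtaszczyk (2011), adapted to the weak greedy setting, and used as a black box in the later analysis. So there is no in-paper proof to compare against directly.

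That said, the route you outline is not the one used in \cite{binev2011convergence} for this particular bound. The determinantal/Hadamard argument you sketch is the technique of the \emph{later} paper \cite{devore2013greedy}; it produces product-type inequalities of the form $\prod_{i} \sigma_i^2 \leq C \prod_{i} d_i^2$ and, after telescoping, constants that are either polynomial in $N$ or exponential with a different base. The specific constant $2^{N+1}/\sqrt{3}$ in the 2011 paper (and hence $\kappa^{N+1}/\sqrt{3}$ in the weak-greedy adaptation) comes from a different mechanism: after orthonormalizing as you do, one proves a ``flatness lemma'' showing that if $d_N(F)\leq \varepsilon$ then every subsequent greedy residual makes a controlled angle with a fixed $N$-dimensional optimal subspace, and then runs a two-term recursion on $\sigma_n$. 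The factor $\sqrt{3}$ is the sharp constant in a specific two-dimensional projection inequality inside that lemma, not the value of a $2\times 2$ determinant.

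Your determinantal plan is a legitimate path to \emph{some} comparison $\sigma_N \leq C_N\, d_N$, but the step you yourself flag as the obstacle is a real one: the multilinear expansion of $\det A$ with the rank-$N$ splitting does not naturally collapse to $\kappa^{N+1}/\sqrt{3}$, and I am not aware of any regrouping that recovers that exact form. If the target is the constant as stated, you should follow the flatness-lemma argument of \cite{binev2011convergence} rather than the determinantal one.
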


\subsection{Error estimator}
The development of a residual-based a posteriori error estimator in the $V$-norm for the reduced model is presented in \cite{rozza2008reduced}. The error estimator is based on the discrete coercivity and continuity constants defined by
\begin{equation}
    \alpha_{\mathcal{N}}(\mu):=\inf _{v \in V_{\mathcal{N}}} \frac{a(v, v ; \mu)}{\|v\|_{V}^2}, \quad  \quad \gamma_{\mathcal{N}}(\mu):=\sup _{w \in V_{\mathcal{N}}} \sup _{v \in V_{\mathcal{N}}} \frac{a(w, v ; \mu)}{\|w\|_V \|v\|_{V}}.
\end{equation}
Owing to the conformity of the approximation space $V_{\mathcal{N}}$, i.e., $V_{\mathcal{N}}\subset V$, the discrete coercivity and continuity constants satisfy
\begin{equation}
    \alpha(\mu) \leq \alpha_{\mathcal{N}}(\mu), \quad \gamma_{\mathcal{N}}(\mu)\leq \gamma(\mu), 
\end{equation}
where $\alpha(\mu)$ and $\gamma(\mu)$ are defined in \eqref{corec}. For any $\mu \in \mathcal{D}$, we assume that we have access to easily computable lower and upper bounds
\begin{align}
    0<\alpha_{\mathrm{LB}}(\mu)  \leq \alpha_{\mathcal{N}}(\mu), \\
     \gamma_{\mathcal{N}}(\mu) \leq  \gamma_{\mathrm{UB}}(\mu)  < \infty.
\end{align}
Given $u_{\mathcal{N}}(\mu)$ and $u_N(\mu)$ solutions of problems \eqref{dis prob} and \eqref{rb prob}, respectively, the error $e(\mu):= u_{\mathcal{N}}(\mu)-u_N(\mu)$ satisfies
\begin{equation}
    a(e(\mu),v;\mu)=r_N(v;\mu), \quad \forall v \in V_{\mathcal{N}},
\end{equation}
where $r_N(\cdot;\mu) \in V_{\mathcal{N}}'$ is the residual 
\begin{equation}
    r_N\left(v ; \mu\right):=f\left(v ; \mu\right)-a\left(u_N(\mu), v ; \mu\right), \quad \forall v \in V_{\mathcal{N}}.
\end{equation}
The Riesz representation of the residual, $R_N(\mu) \in V$, satisfies 
\begin{equation}
    \left(R_N(\mu),v\right)_V=r_N(v;\mu), \quad \forall v \in V_{\mathcal{N}}.
\end{equation}
Then, the error equation can be written as
\begin{equation}
    a(e(\mu),v;\mu)=\left(R_N(\mu),v\right)_V, \quad \forall v \in V_{\mathcal{N}}.
    \label{error}
\end{equation}
Now, we can state the following result:
\begin{lemma}
    For any $\mu \in \mathcal{D}$, the RB error estimator 
    \begin{equation}
        \eta_N(\mu)=\frac{\|R_N(\mu)\|_V}{\alpha_{\mathrm{LB}}(\mu)}
        \label{bd def}
    \end{equation}
    satisfies 
    \begin{align}
        \label{est}
        \left\|u_{\mathcal{N}}(\mu)-u_N(\mu)\right\|_V & \leq \eta_N(\mu)\leq \frac{  \gamma_{\mathrm{UB}}(\mu)}{\alpha_{\mathrm{LB}}(\mu)} \left\|u_{\mathcal{N}}(\mu)-u_N(\mu)\right\|_V
    \end{align}
    \label{lemma}
\end{lemma}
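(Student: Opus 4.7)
The plan is to establish the two inequalities separately by exploiting the error equation \eqref{error}, together with the coercivity and continuity properties of $a(\cdot,\cdot;\mu)$ on $V_{\mathcal{N}}$ and the Cauchy--Schwarz inequality in the $(\cdot,\cdot)_V$ inner product. Both bounds reduce to testing \eqref{error} with an appropriate choice of $v$ and then using the definitions of $\alpha_{\mathcal{N}}(\mu)$, $\gamma_{\mathcal{N}}(\mu)$ and the given bounds $\alpha_{\mathrm{LB}}(\mu)\le\alpha_{\mathcal{N}}(\mu)$, $\gamma_{\mathcal{N}}(\mu)\le\gamma_{\mathrm{UB}}(\mu)$.

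For the first inequality, I would test \eqref{error} with $v=e(\mu)\in V_{\mathcal{N}}$, yielding $a(e(\mu),e(\mu);\mu)=(R_N(\mu),e(\mu))_V$. Bounding the left-hand side from below by $\alpha_{\mathcal{N}}(\mu)\|e(\mu)\|_V^2$ via coercivity on $V_{\mathcal{N}}$, and the right-hand side from above by $\|R_N(\mu)\|_V\|e(\mu)\|_V$ via Cauchy--Schwarz, one can divide by $\|e(\mu)\|_V$ (the case $e(\mu)=0$ being trivial) and use $\alpha_{\mathrm{LB}}(\mu)\le\alpha_{\mathcal{N}}(\mu)$ to conclude $\|e(\mu)\|_V\le\|R_N(\mu)\|_V/\alpha_{\mathrm{LB}}(\mu)=\eta_N(\mu)$.

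For the second inequality, I would instead represent $\|R_N(\mu)\|_V$ as a supremum. Since $R_N(\mu)$ is the Riesz representative of a functional on $V_{\mathcal{N}}$, one has $\|R_N(\mu)\|_V=\sup_{v\in V_{\mathcal{N}}}(R_N(\mu),v)_V/\|v\|_V$, and then \eqref{error} lets me replace $(R_N(\mu),v)_V$ by $a(e(\mu),v;\mu)$. The continuity bound $a(e(\mu),v;\mu)\le\gamma_{\mathcal{N}}(\mu)\|e(\mu)\|_V\|v\|_V\le\gamma_{\mathrm{UB}}(\mu)\|e(\mu)\|_V\|v\|_V$ applied inside the supremum gives $\|R_N(\mu)\|_V\le\gamma_{\mathrm{UB}}(\mu)\|e(\mu)\|_V$, and dividing by $\alpha_{\mathrm{LB}}(\mu)$ yields the desired upper bound on $\eta_N(\mu)$.

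I do not anticipate any real obstacle here, as this is a classical residual--energy-norm argument; the only mild subtlety is to be careful that the Riesz representation and the supremum are taken over $V_{\mathcal{N}}$ rather than $V$, so that \eqref{error} is applicable in the upper bound step. Once this is observed, both estimates follow in a few lines.
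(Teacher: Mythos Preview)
Your argument is correct and is precisely the standard residual--coercivity/continuity proof of this estimate. The paper itself does not spell out a proof but defers to Lemma~3.1 of \cite{eftang2010hp} and Section~4.2.5 of \cite{hesthaven2016certified}, where exactly the argument you describe is given; in particular, your observation that the Riesz representative $R_N(\mu)$ lies in $V_{\mathcal{N}}$ (so that one may legitimately test \eqref{error} with $v=R_N(\mu)$, or equivalently take the supremum over $V_{\mathcal{N}}$) is the only point requiring care, and you have handled it correctly.
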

\noindent The proof of this result can be found in Lemma $3.1$ in \cite{eftang2010hp}, while the detailed computation of the estimator $\eta_N$ is presented in Section $4.2.5$ in \cite{hesthaven2016certified}.

\section{Nonlinear reduced basis method}

In this section, we formulate a nonlinear reduced basis method. We begin by introducing a tree structure for the subdomains. Then, we present an algorithm for partitioning the parameter domain and assigning a linear RB space to each of them. Finally, we develop our main convergence result for the number of subdomains required to achieve a certain accuracy.

\subsection{Tree-based structure for the subdomains}
Consider a binary tree of depth \( L \), which can contain up to  $K$ leaf nodes. To index the nodes at each level \( l \in \{1, \dots, L\} \), we define the set of Boolean vectors
\begin{equation}
    \mathcal{B}_l := \{1\} \times \{0,1\}^{l-1},
\end{equation}
so that any vector \( B_l \in \mathcal{B}_l \) is of the form
\begin{equation}
    B_l = (1, i_2, \dots, i_l), \quad i_j \in \{0,1\} \text{ for } j = 2, \dots, l.
\end{equation}
Each node at level \( l \) of the tree is uniquely associated with a vector \( B_l \in \mathcal{B}_l \). Tree traversal is encoded via binary extension of these vectors: concatenating a $0$ to \( B_l \) corresponds to descending to the left child, while appending a $1$ corresponds to the right child. Thus, the node represented by \( B_l \) is the parent of two children given by the vectors
\begin{equation}
    B_{l+1}^{(0)} := (B_l, 0) \quad \text{and} \quad B_{l+1}^{(1)} := (B_l, 1).
\end{equation}

We also consider a parameter domain $\mathcal{D}$ and, from this point onward, assume that $\mathcal{D}$ possesses a tensor product structure. The proposed approximation algorithm yields $K$ subdomains and associates a linear RB space to each subdomain. The subdomains preserve the same tensor product structure as the original parameter domain. Each subdomain is defined through the bounding points of the tensor product structure that is each subdomain is of form $\prod_{j=1}^d[a_j,b_j]$ with $a_j<b_j\in \mathcal{D}$, $1\leq j \leq d$. Given any parameter $\mu \in \mathcal{D}$, we determine the subdomain $\mathcal{D}_{B_l}$ that contains $\mu$ by comparing the coordinates of the parameter and the bounding points of the subdomain.

We consider a recursive domain decomposition process structured as a binary tree. The resulting subdomains at each level of the tree are denoted by
\begin{equation}
    \mathcal{D}_{B_l} \subset \mathcal{D}, \quad B_l \in \mathcal{B}_l, \quad 1 \leq l \leq L.
\end{equation}
Each subdomain $\mathcal{D}_{B_l}$  is associated with $N$ parameters $\{\mu^{(1)}_{B_l},...\mu^{(N)}_{B_l}\} \subset \mathcal{D}_{B_l}$ that are selected by the greedy algorithm. We define the linear RB space associated to the subdomain $\mathcal{D}_{B_l}$ as 
\begin{equation}
    V_{B_l,N}=\text{span} \{u_{\mathcal{N}}(\mu^{(1)}_{B_l}),...u_{\mathcal{N}}(\mu^{(N)}_{B_l})\}, \quad B_l \in  \mathcal{B}_l \quad 1 \leq l \leq L .
\end{equation}

\subsection{The approximation algorithm}
We now introduce the algorithm for partitioning the parameter domain $\mathcal{D}$. We start by introducing a training set over the original domain $\mathcal{D}_{1}=\mathcal{D}$, and we randomly choose the initial parameter from this training set. We specify the maximum RB space dimension $N$, and the error tolerance $\epsilon$, and we set $l=1$. The splitting is performed as follows.

\begin{enumerate}
    \item For the current $l$, we consider all leaf nodes $B_l \in \mathcal{B}_l$.
    
    \item For each subdomain $\mathcal{D}_{B_l}$:
    \begin{enumerate}[(i)]
        \item Introduce a finite training sample set $E_{B_l}$ and construct an RB approximation with $N$ parameter values. The parameters are chosen by the greedy algorithm with the initial parameter selected randomly from $E_{B_l}$.
        
        \item  Evaluate the maximum local error estimator $\eta_{B_l}$ over the training set of the current subdomain. The definition of the local error estimator $\eta_{B_l}$ is provided below.
        
        \item If $\eta_{B_l} \leq \epsilon$, then the subdomain needs no more further refinement, and the branch of the associated binary tree is terminated.
        
        \item If $\eta_{B_l} > \epsilon$:
            \begin{itemize}
                \item Split the subdomain $\mathcal{D}_{B_l}$ into two equi-sized subdomains $\mathcal{D}_{(B_l,0)}$, and $\mathcal{D}_{(B_l,1)}$ that preserve the tensor product structure. When $d>1$, the algorithm chooses the longest side of the subdomain and performs the split in this direction. We elaborate on this in Remark \ref{rem geo}.
                \item Split the current branch into two new branches $B_{l+1}^{(0)}=(B_{l},0)$ and $B_{l+1}^{(1)}=(B_{l},1)$.
            \end{itemize}
        \end{enumerate}
    \item Set $l=l+1$, and proceed to Step 1.
\end{enumerate}

The outcome of the algorithm is a library of $K$ RB spaces over $K$ subdomains that are associated with the $K$ leaf nodes of the binary tree. Each subdomain is defined by its tensor product structure and is associated to a linear RB space of size $N$. The intermediate subdomains, along with the linear RB spaces associated with non-leaf nodes, are discarded and do not contribute during the online stage. Furthermore, the depth of the tree, denoted by $L$, is simply the number of nodes in the longest branch. Due to the structure of the algorithm, different branches of the tree may have varying depths, and the maximum possible number of leaf nodes is $2^{L-1}$. 

Let $K_l$, $1 \leq l \leq L$ be the number of subdomains at the $l$-th iteration of the approximation algorithm. This quantity corresponds to the total number of nodes at level $l$ of the tree, combined with the number of leaves from all previous levels. For ease of notation, we replace the Boolean vector used to label each subdomain at the $l$-th iteration of the algorithm with a scalar index $k$, $1 \leq k \leq K_l$. Given a parameter $\mu \in \mathcal{D}$, we identify the subdomain $\mathcal{D}_k \subset \mathcal{D}$ that contains $\mu$, and the nonlinear RB approximation at this iteration of the algorithm denoted by $u_{N,k}(\mu)$ is evaluated such that 
\begin{equation}
    a(u_{N,k}(\mu), v ; \mu)=f(v ; \mu), \quad \forall v \in V_{N,k},
\end{equation}
where $V_{N,k}$ is the linear RB space associated to the subdomain $\mathcal{D}_k$. It should be noted that, during the online stage, the final iteration $l=L$ is used and that $K=K_L$.

Furthermore, the nonlinear RB residual at the $l$-th iteration of the algorithm is given by
\begin{equation}
    r_{N,k}(v;\mu)=f(v;\mu)-a(u_{N,k}(\mu),v;\mu), \quad \forall v \in V_{\mathcal{N}}.
\end{equation}
The Riesz's representation, $R_{N,k}(\mu) \in V$, satisfies
\begin{equation}
    \big(R_{N,k}(\mu),v\big)_V=r_{N,k}(v;\mu),\quad \forall v \in V_{\mathcal{N}}.
\end{equation}
Then, the local RB error bound is given by 
\begin{equation}
    \eta_{N,k}(\mu)=\frac{\|R_{N,k}(\mu)\|}{\alpha_{LB}(\mu)}.
\end{equation}
Furthermore, the maximum local error estimator $\eta_{B_l}$ over a training set $E_l$ on the subdomain $\mathcal{D}_{B_l}$ is given by  
\begin{equation}
     \eta_{B_l}=\max\limits_{\mu\in E_{B_l}} \eta_{N,k}(\mu), 
\end{equation}
where $k$ is the scalar index associated with the subdomain $\mathcal{D}_{B_l}$. Finally, Lemma \eqref{lemma} remains applicable in this context with appropriate adjustments to the notation.

\begin{remark}
In practice, if for a given subdomain the targeted tolerance is achieved at a smaller number of snapshots than $N$, the linear RB space is built upon those snapshots and the branch is terminated. Therefore, the final subdomains may be of different sizes that are at most $N$.
\end{remark}

\begin{remark}
    We use the following two observations in our theoretical results. First, we observe that the given partitioning technique ensures that the largest subdomain in any partition contains the longest side in that partition.
    
    Second, the volume of a subdomain can be bounded below in terms of the length of its longest side. In particular, let $\mathcal{D}$ be a $d$-dimensional parameter domain with side lengths $h_j$, $1 \leq j \leq d$, and define
    \begin{equation}
        \beta=\frac{1}{2}\min_{1 \leq i , j \leq d} \frac{h_{i}}{h_{j}}.
    \end{equation}
    Furthermore, let $\delta_k$, $1 \leq k \leq K_l$, be the volume of each subdomain at the $l$-th iteration of the algorithm, and let $h_{k,j}$, $1 \leq k \leq K_l$, $1 \leq j \leq d$, denote the lengths of their sides. Without loss of generality, assume that $h_{k,1}$ is the length of the longest side. Then,
    \begin{align}
        \delta_k &= h_{k,1} \ h_{k,2} \cdots h_{k,d}\\
        &= h_{k,1} \ h_{k,1} \frac{h_{k,2}}{h_{k,1}} \cdots h_{k,1} \frac{h_{k,d}}{h_{k,1}}\\
        &\geq (\beta_k)^{d-1} (h_{k,1})^d, \quad \beta_k=\min_{1 \leq i , j \leq d} \frac{h_{k,i}}{h_{k,j}}.
    \end{align}
    In general,
    \begin{equation*}
        \delta_k \geq (\beta_k)^{d-1} (\hat{h}_k)^d, \quad \hat{h}_k= \max_{1\leq j\leq d} h_{k,j}.
    \end{equation*}
    Performing the split in the direction of the longest edge ensures that
    \begin{equation}
        \beta_k \geq \beta
    \end{equation}
    and
    \begin{equation}
        \delta_k \geq (\beta)^{d-1} (\hat{h}_k)^d.
        \label{beta}
    \end{equation}
    \label{rem geo}
\end{remark}

\subsection{Convergence analysis}
The present analysis is influenced by the framework developed in \cite{cohen2015approximation} and employs it to study how the size of the parameter domain impacts the accuracy of the solution approximation.
We start by considering an elliptic parametric PDE of the form \eqref{para pde} with a parameter domain normalized to $\mathcal{D} = [-1,1]^d$. The following assumption is imposed on the parametric dependence of the solution.

\begin{assumption}
The parameter-to-solution map $\mu \mapsto u_{\mathcal{N}}(\mu)$ admits an extension to an open set $\mathcal{O} \subset \mathbb{C}^d$ containing the parameter domain $\mathcal{D}$ such that, for any $z \in \mathcal{O}$, the map $z \mapsto u_{\mathcal{N}}(z)$ is holomorphic in each variable $z_j$ with the uniform bound
\begin{equation}
    \sup_{z \in \mathcal{O}} \|u_{\mathcal{N}}(z)\|_V \leq C.
\end{equation}
\label{assump para}
\end{assumption}

Consequently, the unit polydisc
\begin{equation}
    \mathcal{P} := \left\{z = \left(z_j\right)_{j=1}^d \in \mathbb{C}^d : \left|z_j\right| \leq 1\right\} = \otimes_{j=1}^d \left\{z_j\in \mathbb{C} : \left|z_j\right| \leq 1\right\}
\end{equation}
is contained in $\mathcal{O}$. Since $\mathcal{O}$ is open, there exists an $\varepsilon>0$ such that the enlarged polydisc
\begin{equation}
    \mathcal{P}_{\varepsilon}:=\left\{z=\left(z_j\right)_{j=1}^d\in \mathbb{C}^d :\left|z_j\right| \leq 1+\varepsilon \right\}=\otimes_{j=1}^d \left\{z_j\in \mathbb{C} : \left|z_j\right| \leq 1+\varepsilon\right\}
\end{equation}
remains in $\mathcal{O}$.

We assume that the original parameter domain is partitioned into $K$ subdomains each of which has side lengths $h_k=(h_{k,1},h_{k,2},...,h_{k,d})$, $1 \leq k \leq K$. Let ${\mathcal{D}}_k\subset \mathcal{D}$ be any of the subdomains. It is clear that ${\mathcal{D}}_k\subset \mathcal{O}$. For a suitable choice of a vector $\rho_{k} = (\rho_{k,j})_{j=1}^d$, we define the associated polydisc as  
\begin{equation}
    \mathcal{P}_{\rho_k}:=\left\{z=\left(z_j\right)_{j=1}^d:\left|z_j\right| \leq \rho_{k,j} \right\}=\otimes_{j=1}^d\left\{\left|z_j\right| \leq \rho_{k,j} \right\}.
    \label{poly vec}
\end{equation}
We construct $\mathcal{P}_{\rho_k}$ such that it contains the subdomain $\mathcal{D}_k$ and is itself contained within the set $\mathcal{O}$. A possible choice for the components of $\rho_k$ that satisfies this condition is
\begin{equation}
    \rho_{k,j}=\frac{h_{k,j}}{2}+\varepsilon.
\end{equation}

Now, we assume that ${\mathcal{D}}_k$ is centered around $\hat{\mu}_k \in \mathcal{D}$. Then, for any $\mu \in \mathcal{D}_k$, where $\mathcal{D}_k=\prod_{j=1}^dI_{k,j}$, with $I_{k,j}=[\hat{\mu}_k-\frac{h_{k,j}}{2},\hat{\mu}_k+\frac{h_{k,j}}{2}]$, we introduce the following normalization
\begin{equation}
    \mu_{k,j}=\frac{2(\mu_j-\hat{\mu}_{k,j})}{h_{k,j}} \in [-1,1].
    \label{normalize}
\end{equation}
We extend this normalization to both the polydisc $\mathcal{P}{\rho_k}$ and the open set  $\mathcal{O}$. Specifically, the polydisc $\mathcal{P}{\rho_k}$ is mapped to a rescaled polydisc $\mathcal{P}{\hat{\rho}_k}$ such that 
\begin{equation}
    \hat\rho_{k,j}=1+\frac{2\varepsilon}{h_{k,j}},  \quad 1\leq j\leq d.
\end{equation}
Similarly, the open set $\mathcal{O}$ is transformed under this normalization to its rescaled counterpart, denoted by $\hat{\mathcal{O}}$. Accordingly, for any $\mu \in \mathcal{D}_k$, the solution $u_{\mathcal{N}}(\mu)$ can be expressed as
\begin{equation}
    u_{\mathcal{N}}(\mu)=u_{\mathcal{N},k} (\mu_k),
\end{equation}
where $u_{\mathcal{N},k}(\mu_k)$ solves the discrete parametric PDE in the normalized version of the subdomain $\mathcal{D}_k$.


\subsubsection{Convergence of $N$-term truncated polynomial expansion}
The goal of this section is to establish convergence rates of polynomial approximation of the parameter-to-solution map $\mu \mapsto u(\mu)$ over subdomains of the original parameter domain $\mathcal{D}$. These convergence rates are subsequently inherited by the Kolmogorov $N$-widths associated with the corresponding local solution manifolds. The approximation is constructed by truncating the Taylor series expansion of $u(\mu)$ to $N$ selected terms.

For any given subdomain $\mathcal{D}_k \subset \mathcal{D}$, $1 \leq k \leq K$, the polynomial approximation is based on the Taylor series expansion 
\begin{equation}
    u_{\mathcal{N}}(\mu)=u_{\mathcal{N},k}(\mu_k)=\sum_{\nu \in \mathbb{N}^d} t_{k,\nu} (\mu_k)^{\nu}, 
\end{equation}
where $t_{k,\nu}$ are the Taylor coefficients given by
\begin{equation}
    t_{k,\nu}=\frac{\partial^{\nu}u_{\mathcal{N},k}(0)}{\nu!}
\end{equation}
and
\begin{equation}
    (\mu_k)^{\nu}=\prod_{j=1}^d (\mu_{k,j})^{\nu_j}
\end{equation}
with $\mu_k$ defined by \eqref{normalize}. We begin by estimates on the coefficients of the Taylor expansion that is based on the holomorphic extension of the solution map.

We recall the Cauchy integral formula, which asserts that if $\varphi$ is a holomorphic function from $\mathbb{C}$ into a Banach space $V$, defined on a simply connected open subset $\mathcal{O} \subset \mathbb{C}$, and if $\Gamma$ is a closed, rectifiable path entirely contained within $\mathcal{O}$, then for any point $\tilde{z}$ inside the region bounded by $\Gamma$, the function $\varphi$ satisfies
\begin{equation}
    \varphi(\tilde{z}) = \frac{1}{2\pi i} \int_{\Gamma} \frac{\varphi(z)}{\tilde{z} - z} \, \mathrm{d}z,
    \label{cauchy}
\end{equation}
where the division represents scalar multiplication of the vector $\varphi(z) \in V$ by the complex inverse $(\tilde{z} - z)^{-1}$, and the curve $\Gamma$ is traversed in the positive (counterclockwise) direction~\cite{Hervé+1989}.

We know that $u_{\mathcal{N},k}$ is holomorphic in the set $\hat{\mathcal{O}}$, and that $\hat{\mathcal{O}}$ is an open neighborhood of the $d$-dimensional polydisc $\mathcal{P}_{\hat\rho_k}$. In addition, we have
\begin{equation}
    \sup_{z \in \hat {\mathcal{O}}} \|u_{\mathcal{N},k}(z)\|_V = \sup_{z \in \mathcal{O}} \|u_{\mathcal{N}}(z)\|_V  \leq C.
    \label{bound on poly}
\end{equation}
We may thus apply the Cauchy formula \eqref{cauchy} recursively in each variable $z_j$, and obtain for any $(\tilde{z}_1, \dots, \tilde{z}_d)$ in the interior of $\mathcal{P}_{\hat\rho_k}$ a representation of $u_{\mathcal{N},k}(\tilde{z}_1, \dots, \tilde{z}_d)$ as a multiple integral
\begin{equation}
u_{\mathcal{N},k}(\tilde{z}_1, \dots, \tilde{z}_d)
= (2\pi i)^{-d} \int_{|z_1| = \hat\rho_{k,1}} \cdots \int_{|z_d| = \hat\rho_{k,d}} 
\frac{u_{\mathcal{N},k}(z_1, \dots, z_d)}{(\tilde{z}_1 - z_1) \cdots (\tilde{z}_d - z_d)} \, \mathrm{d}z_1 \cdots \mathrm{d}z_d.
\end{equation}
Differentiating this expression yields
\begin{equation}
\frac{\partial^{|\nu|}}{\partial \tilde{z}_1^{\nu_1} \cdots \partial \tilde{z}_d^{\nu_d}} u_{\mathcal{N},k}(0, \dots, 0)
= \nu! (2\pi i)^{-d} \int_{|z_1| = \hat\rho_{k,1}} \cdots \int_{|z_d| = \hat\rho_{k,d}}
\frac{u_{\mathcal{N},k}(z_1, \dots, z_d)}{z_1^{\nu_1 + 1} \cdots z_d^{\nu_d + 1}} \, \mathrm{d}z_1 \cdots \mathrm{d}z_d.
\end{equation}
Therefore, using \eqref{bound on poly} , we obtain the estimate
\begin{equation}
\left\| \partial^{\nu} u_{\mathcal{N},k}(0) \right\|_V = \left\| \frac{\partial^{|\nu|} u_{\mathcal{N},k}}{\partial \tilde{z}_1^{\nu_1} \cdots \partial \tilde{z}_d^{\nu_d}}(0, \dots, 0) \right\|_V 
\leq C \nu! \prod_{j \leq d} \hat\rho_{k,j}^{-\nu_j}.
\end{equation}
It follows that
\begin{equation}
    \left\|t_{k,\nu}\right\|_V \leq C \hat\rho_k^{-\nu}=C \prod_{j=1}^d \hat\rho_{k,j}^{-\nu_j}, \quad \nu \in \mathbb{N}^d.
    \label{taylor bound}
\end{equation}
This estimate holds for any $\hat\rho_k$ such that the polydisc $\mathcal{P}_{\hat\rho_k}$ is contained in the open set $\hat{\mathcal{O}}$.

We denote by 
\begin{equation}
    h:=\max_{\substack{1 \leq k \leq K \\ 1 \leq j \leq d}}h_{k,j}
    \label{h}
\end{equation}
the maximum side length across all subdomains in the partition of the parameter domain. For each subdomain $\mathcal{D}_k$, we denote the associated solution manifold by
\begin{equation}
    \mathcal{M}_{\mathcal{N},k}=\left\{u_{\mathcal{N}}(\mu) \mid \mu \in \mathcal{D}_k\right\} \subset V, \quad 1 \leq k \leq K.
\end{equation}

We now exploit the estimate \eqref{taylor bound} to establish the following result concerning the Kolmogorov $N$-widths associated with the solution manifolds over the subdomains.

\begin{proposition} 
    Consider an elliptic parametric PDE of the form \eqref{para pde} with a $d$-dimensional parameter domain such that the parameter-to-solution map satisfies Assumption \ref{assump para}. Furthermore, let $\mathcal{D}$ be partitioned into $K$ tensor-product-structured subdomains ${\mathcal{D}_k}$,$1 \leq k \leq K$, and let $h$ denote the maximum side length of the partition as defined in \eqref{h}. Then there exists a constant $\hat{C} > 0$, independent of $h$, such that
    \begin{equation}
        \max_{1 \leq k \leq K} d_{N}(\mathcal{M}_{\mathcal{N},k}) \leq \hat C h^{N^{1/d}}.
    \label{hat C}
    \end{equation}
\end{proposition}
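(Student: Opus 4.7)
The plan is to construct, for each subdomain $\mathcal{D}_k$, an explicit $N$-dimensional subspace $W_k \subset V$ built from truncated Taylor coefficients, and to control the approximation error uniformly in $k$. Since $d_N(\mathcal{M}_{\mathcal{N},k})$ is an infimum over all $N$-dimensional subspaces, exhibiting a single candidate $W_k$ with approximation error bounded by $\hat C h^{N^{1/d}}$ yields the claim after taking the maximum over $k$.

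Concretely, I would fix $k$ and work in the normalized variable $\mu_k \in [-1,1]^d$ of \eqref{normalize}, so that the expansion $u_{\mathcal{N},k}(\mu_k) = \sum_{\nu \in \mathbb{N}^d} t_{k,\nu}(\mu_k)^{\nu}$ is available together with the coefficient estimate \eqref{taylor bound}. I would choose the simplicial index set $\Lambda_N = \{\nu \in \mathbb{N}^d : |\nu|_1 \leq M\}$ with $M$ the largest integer such that $\binom{M+d}{d} \leq N$, and set $W_k := \operatorname{span}\{t_{k,\nu} : \nu \in \Lambda_N\}$, so that $\dim W_k \leq N$. Using the truncated Taylor polynomial as the approximant in $W_k$ and noting that $|(\mu_k)^\nu| \leq 1$ on $[-1,1]^d$, the error is bounded by
\begin{equation*}
\sum_{\nu \notin \Lambda_N} \|t_{k,\nu}\|_V \leq C \sum_{\nu \notin \Lambda_N} \prod_{j=1}^d \hat\rho_{k,j}^{-\nu_j} \leq C \sum_{n = M+1}^{\infty} \binom{n+d-1}{d-1} \Big(\frac{h}{2\varepsilon}\Big)^{n},
\end{equation*}
where I use $\hat\rho_{k,j}^{-1} = h_{k,j}/(h_{k,j}+2\varepsilon) \leq h/(2\varepsilon)$ to eliminate the $k$-dependence and group terms by $|\nu|_1 = n$. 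For $h$ below a fixed threshold (say $h \leq \varepsilon$), the resulting series is geometrically dominated by a constant depending on $N$, $d$, $\varepsilon$ times $(h/(2\varepsilon))^{M+1}$.

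The main obstacle is the final accounting, namely matching the integer truncation level $M+1$ to the stated real exponent $N^{1/d}$ without introducing $h$-dependent constants. From $\binom{M+1+d}{d} > N$ and the elementary bound $\binom{n}{d} \leq n^d/d!$, one obtains $M+1 \geq (d!\,N)^{1/d} - d$, which exceeds $N^{1/d}$ for all $N$ beyond a $d$-dependent threshold; the finitely many small-$N$ cases can be absorbed by enlarging $\hat C$. Consequently $(h/(2\varepsilon))^{M+1} \leq (2\varepsilon)^{-(M+1)} h^{N^{1/d}}$ for $h \leq 1$, and all remaining factors — the combinatorial constants from the geometric sum, $C$ from Assumption \ref{assump para}, and $(2\varepsilon)^{-(M+1)}$ — collapse into a single $\hat C$ depending only on $N$, $d$, $\varepsilon$, and $C$, and in particular independent of $h$. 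Taking the supremum over $\mu_k \in [-1,1]^d$ and then the maximum over $k = 1, \dots, K$ yields the stated bound.
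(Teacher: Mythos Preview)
Your overall strategy coincides with the paper's: localize to a subdomain, invoke the Cauchy estimate \eqref{taylor bound}, truncate the Taylor expansion on a simplicial index set, and control the tail geometrically via $\hat\rho_{k,j}^{-1}\le h/(2\varepsilon)$. Your bookkeeping is more direct---binomial identities in place of the paper's level-set decomposition and the Hurwitz--Lerch transcendent---but the mechanism is the same.

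The genuine gap is the claim that ``the finitely many small-$N$ cases can be absorbed by enlarging $\hat C$.'' The shortfall is in the \emph{exponent} of $h$, not in a multiplicative prefactor, so it cannot be repaired by an $h$-independent constant. Take $d=2$, $N=2$: the constraint $\binom{M+2}{2}\le 2$ forces $M=0$, hence your construction delivers only $O(h^{M+1})=O(h)$, whereas the stated bound is $O(h^{\sqrt 2})$; since $h^{\,1-\sqrt 2}\to\infty$ as $h\to 0$, no choice of $\hat C$ closes this. The same happens for $d=2$, $N=5$ (where $M+1=2<\sqrt 5$), and analogous cases arise in every $d\ge 2$ whenever $N$ lies strictly between consecutive simplex numbers $\binom{M+d}{d}$. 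Nor is this an artifact of your specific $\Lambda_N$: any $N$-term Taylor truncation with $N<\binom{M+1+d}{d}$ must omit at least one index with $|\nu|=M+1$, and that single omitted term already contributes $O(h^{M+1})$ to the error. The paper's proof glides past exactly this point by replacing $\sum_{\nu\notin\Lambda_N}$ with $\sum_{\nu\notin S_{k_h}}$ in the first tail inequality---a substitution that silently discards the boundary terms in $S_{k_h}\setminus\Lambda_N$; that is the step you would need to justify (or repair) rather than imitate.
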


\begin{proof}
    We take $\rho_h=(\rho_{h,j})_{j=1}^d$ such that
    \begin{equation}
        \rho_{h,j}=\zeta_h :=1+\frac{2\varepsilon}{h}>1, \quad 1\leq j \leq d.
        \label{rho}
    \end{equation}
    It is clear that this choice of $\rho_h$ ensures that the polydisc $\mathcal{P}_{\rho_h}$ is contained in the open set $\hat{\mathcal{O}}$ for all subdomains $\mathcal{D}_k$. Let $\{\nu^{(n)}\}_{n \in \mathbb{N}}$ denote the multi-indices $\nu$ such that $\rho_h^{-\nu}=\zeta_h^{-|\nu|}$ are arranged in a non-increasing order. Let $\Lambda_N$ be a set containing the indices $\nu$ corresponding to the $N$ largest values of $\zeta_h^{-|\nu|}$. We consider the specific threshold
    \begin{equation}
        k_h= \hat{N} \lambda_{h}, 
        \label{kh}
    \end{equation}
    where
    \begin{equation}
        \hat{N}:=\left|\nu^{(N)}\right| +1, \quad \lambda_h:= \text{ln} (\zeta_h).
        \label{lambda def}
    \end{equation}
    Then, we have 
    \begin{equation}
        \Lambda_N\subseteq\left\{\nu \in \mathbb{N}^d: \zeta_h^{-|\nu|} > e^{-k_h}\right\}=:S_{k_h}.
    \end{equation}
    Equivalently, 
    \begin{equation}
        S_{k_h}:=\left\{\nu \in \mathbb{N}^d: \left|\nu\right| \lambda_h < k_h\right\}.
        \label{k}
    \end{equation}
    Due to ties in the values of $\zeta_h^{-|\nu|}$, the set $S_{k_h}$ may contain more than $N$ indices. In this case, the set $\Lambda_N$ s obtained by discarding indices corresponding to the smallest values of $\zeta_h^{-|\nu|}$ in $S_{k_h}$ so that $|\Lambda_N| = N$.
    
    The cardinality of $\Lambda_{N}$ is bounded by the number of all partial derivatives up to order $\hat{N}-1$ which is the maximum number of derivatives that could appear in the Taylor expansion if we consider the $N$ largest values of $\zeta_h^{-|\nu|}$. Then,
    \begin{align}
        N&\leq \sum_{n=0}^{\hat{N}-1} {n+d-1 \choose d-1} = \frac{\left(\hat{N}-1+d\right)!}{\left(\hat{N}-1\right)! \ d!}\\
        &= \frac{\left(\hat{N}-1+d\right) \left(\hat{N}-1+(d-1)\right) \cdots \left(\hat{N}\right)}{d!}\\
        &=\left(\frac{\hat{N}-1}{d} +1\right)\left(\frac{\hat{N}-1}{d-1} +1 \right) \cdots (\hat{N})\\
        &\leq (\hat{N})^d\\
        &=\left( \frac{k_h}{\lambda_h}\right)^d
        \label{est N}
    \end{align}
    
    In general, sets $\Lambda_n$ of arbitrary size $n$ corresponding to a threshold $k$ consist of at most all integer lattice points inside the simplex bounded by the coordinate hyperplanes together with the hyperplane $\lambda_{h} \sum_{j=1}^d t_j = k$. These sets are downward closed and their cardinality is bounded by the volume of the following continuous simplex
    \begin{equation}
        T_{k}:=\left\{\left(t_1, \ldots, t_d\right) \in \mathbb{R}^d: t_j \geq-1, j=1, \ldots, d, \text { and } \sum_{j=1}^d t_j \leq \frac{k}{\lambda_{h}} \right\}.
    \end{equation}
    Then,
    \begin{align}
        |\Lambda_n|=|S_{k}|\leq\left|T_k\right|&=\frac{1}{d!} \left(\frac{k+d \lambda_h}{\lambda_h}\right)^d.
        \label{set size}
    \end{align}
    
    For any $\mu \in \mathcal{D}$ there exists a subdomain $\mathcal{D}_k \subset \mathcal{D}$ such that $\mu \in \mathcal{D}_k$. Therefore, the approximation error when keeping only the $N$ terms with indices in $\Lambda_N$ is given by 
    \begin{align}
          \left\|u_{\mathcal{N}}(\mu)-\sum_{\nu \in \Lambda_N} t_{k,\nu} \mu_k^\nu \right\|_V 
        & \leq \sum_{\nu \notin S_{k_h}} \left\|t_{k,\nu} \right\|_V \\
        & \leq C\sum_{\nu \notin S_{k_h}} \zeta_h^{-|\nu|} \\
        & \leq C \sum_{l \geq k_h } e^{-l} \left|\left\{\nu: e^{-l-1} < \zeta_h^{-|\nu|} \leq e^{-l}\right\}\right|\\ 
        & \leq C \sum_{l \geq k_h } e^{-l} \left|S_{l+1}\right|.
    \end{align}
    Using the estimate \eqref{set size}, we obtain
    \begin{equation}
          \left\|u_{\mathcal{N}}(\mu)-\sum_{\nu \in \Lambda_N} t_{k,\nu} \mu_k^\nu \right\|_V  \leq \frac{C}{d! \ (\lambda_h)^d} \sum_{l \geq k_h } e^{-l} \left(l+1+ d\lambda_h\right)^d.
        \label{err_1}
    \end{equation}
    From \eqref{kh}, estimate \eqref{err_1} becomes
    \begin{align}
          \left\|u_{\mathcal{N}}(\mu)-\sum_{\nu \in \Lambda_N} t_{k,\nu} \mu_k^\nu \right\|_V &  \leq \frac{C \ \hat{N}^d}{d! \  (k_h)^d} \sum_{l \geq k_h } e^{-l} \left(l+1+\hat{N}^{-1}d k_h\right)^d\\
        & \leq \frac{C \ \hat{N}^d}{d!} \frac{\Phi\left(e^{-1},-d,(\hat{N}^{-1}d+1)k_h+1\right)}{(k_h)^d} e^{-k_h},
    \end{align}
    where $\Phi$ is the Hurwitz Lerch transcendent which converges for these values and gives a polynomial of degree $d$ in $k_h$. Consequently, the ratio $\frac{\Phi}{(k_h)^d}$ is uniformly bounded as $h\rightarrow 0$ $\left(k_h \rightarrow \infty\right)$. As a result, there exisits a constant $C_1$ such that
    \begin{align}
          \left\|u_{\mathcal{N}}(\mu)-\sum_{\nu \in \Lambda_N} t_{k,\nu} \mu_k^\nu \right\|_V &\leq C_1 e^{-k_h}\\
        &\leq C_1 e^{-\lambda_hN^{1/d}},
    \end{align}
    where $C_1$ is a constant that depends on $d$ and $N$, and the second inequality comes from estimate \eqref{est N}. From the definitions \eqref{rho} and \eqref{lambda def}, it can be inferred that 
    \begin{align}
        e^{-\lambda_h}&=\frac{1}{\zeta_h}= \frac{h}{h +2\varepsilon}\leq \frac{h}{2\varepsilon}.
    \end{align} 
    Therefore, we obtain the error estimate
    \begin{equation}
          \max_{1 \leq k \leq K}\sup_{\mu \in \mathcal{D}_k} \left\|u_{\mathcal{N}}(\mu)-\sum_{\nu \in \Lambda_N} t_{k,\nu} \mu_k^\nu \right\|_V \leq \hat C h^{N^{1/d}}.
    \end{equation}
    As a result, the Kolmogorov $N$-width inherits the same convergence rate which completes the proof
    
\end{proof}

\subsubsection{Convergence theory of the nonlinear RB method}
We recall that $L$ denotes the total number of iterations of the approximation algorithm. At each iteration $l$, where $1 \leq l \leq L $, the domain is partitioned into  $K_l$  subdomains. For each subdomain $k$, with $ 1 \leq k \leq K_l $,  $\mathcal{M}_{\mathcal{N},k}$ denotes the corresponding solution manifold at the $l$-th iteration.

Given $1 \leq l \leq L$ and $1 \leq k \leq K_l$, we denote by 
\begin{equation}
    \hat{\sigma}_l(\mathcal{M}_{\mathcal{N}}):= \max_{1 \leq k \leq K_l} \sigma_N(\mathcal{M}_{\mathcal{N},k})
    \label{def sigma}
\end{equation}
the maximum greedy approximation error at at the $l$-th iteration of the algorithm. Due to Céa's lemma, we have 
\begin{equation}
    \max_{1 \leq k \leq K_l}\sup_{\mu \in \mathcal{D}_k} \left\|u_{\mathcal{N}}(\mu)-u_{N,k}(\mu)\right\|_V \leq  \frac{\overline{\gamma}}{\underline{\alpha}} \hat{\sigma}_l(\mathcal{M}_{\mathcal{N}}),
    \label{cea}
\end{equation}
where
\begin{align}
    \underline{\alpha} & :=\inf _{\mu \in \mathcal{D}} \alpha_{LB}(\mu), \\
    \overline{\gamma} & :=\sup _{\mu \in \mathcal{D}} \gamma_{UB}(\mu),
\end{align}
are assumed to be available. Furthermore, let $\delta_{k}$ denote the volume of the subdomain $\mathcal{D}_k$. We assume that the approximation algorithm satisfies the following assumption:
\begin{assumption}
    \label{quasi}
    The parameter domain partition resulting from the approximation algorithm is always quasi-uniform, that is there exists $\xi > 0$ such that for all $\epsilon>0$, we have
    \begin{equation}
        \underline{\delta}_L \geq \xi \overline{\delta}_L,
        \end{equation}
    where
    \begin{equation}
        \underline{\delta}_L := \min _{1 \leq k \leq K_L} \delta_{k}, \quad \overline{\delta}_L := \max _{1 \leq k \leq K_L} \delta_{k}.
    \end{equation}
\end{assumption}
This assumption can be readily enforced during the design of the algorithm by subdividing large subdomains that violate the quasi-uniformity condition. However, in many practical scenarios, it arises naturally, particularly when the variation of the solution with respect to the parameter is roughly uniform across the entire parameter domain. It is important to note that this assumption is required only for the theoretical analysis and is not enforced in practice.

Now, we are in position to state our main result, which characterizes the effect of varying the tolerance $\epsilon$ on the number of subdomains $K$ resulting from the nonlinear RB approximation for fixed values of $N$ and $d$.
\begin{theorem} 
    Consider an elliptic parametric PDE of the form \eqref{para pde} with a $d$-dimensional parameter domain such that the parameter-to-solution map satisfies Assumption \ref{assump para}. 
    Let $N$ denote the dimension of the RB space used in the nonlinear RB approximation and for any prescribed tolerance $\epsilon>0$, let $K(\epsilon)$ be the number of resulting subdomains. If Assumption \ref{quasi} holds, then there exists a constant $C > 0$, independent of $\epsilon$, such that
    \begin{equation}
        K(\epsilon) \leq \max\left\{1, \frac{C}{\epsilon^{d/N^{1/d}}}\right\}.
    \end{equation}
    \label{th conv}
\end{theorem}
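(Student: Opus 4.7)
The plan is to chain together Proposition 4.1, the weak-greedy comparison of Theorem 3.1, Céa's lemma \eqref{cea}, and the upper estimate in Lemma \ref{lemma}, to derive a uniform estimate of the form
\begin{equation*}
    \eta_{N,k}(\mu) \leq C'\, h^{N^{1/d}}
\end{equation*}
valid for every subdomain $\mathcal{D}_k$ of any partition produced along the algorithm and every $\mu\in\mathcal{D}_k$, where $h$ denotes the global maximum side length of the current partition and $C' := (\overline{\gamma}/\underline{\alpha})^2 \kappa^{N+1}\hat{C}/\sqrt{3}$ depends only on $d$, $N$, $\varepsilon$, $\kappa$, $\underline{\alpha}$ and $\overline{\gamma}$. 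Once this estimate is in place, the termination criterion $\eta_{B_l}\leq \epsilon$ together with the splitting rule will yield a lower bound on the longest side $h^\ast$ of the final partition, and Assumption \ref{quasi} combined with Remark \ref{rem geo} will convert it into the stated upper bound on $K(\epsilon)$.

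First I would dispose of the trivial case $K(\epsilon)=1$, for which the conclusion is immediate; otherwise the algorithm performs at least one split, so the last iteration $L$ satisfies $L\geq 2$ and there is a subdomain $P$ split at iteration $L-1$, which means $\eta_P>\epsilon$. Applying the chained estimate to the iteration-$(L-1)$ partition (with global maximum side length $h^{(L-1)}$) gives $h^{(L-1)} > (\epsilon/C')^{1/N^{1/d}}$. To transfer this lower bound to the final partition, I would use the key geometric fact that the splitting rule of Remark \ref{rem geo} halves exactly one side (the longest) of the parent and leaves the other sides unchanged, so each child has a longest side at least half that of its parent, while terminated leaves keep their side lengths between iterations. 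Applied to the subdomain realizing $h^{(L-1)}$, which at iteration $L$ is either still present as a leaf or has just been split, this yields
\begin{equation*}
    h^\ast := \max_{1\leq k\leq K(\epsilon)} \hat{h}_k \geq \tfrac{1}{2}(\epsilon/C')^{1/N^{1/d}}.
\end{equation*}

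To conclude, Remark \ref{rem geo} supplies the volume control $\overline{\delta}_L \geq \beta^{d-1}(h^\ast)^d$ on the leaf realizing $h^\ast$, and Assumption \ref{quasi} propagates this to every leaf via $\underline{\delta}_L \geq \xi\,\overline{\delta}_L$; summing leaf volumes and using $\sum_k \delta_k = |\mathcal{D}|$ gives $K(\epsilon)\,\underline{\delta}_L \leq |\mathcal{D}|$, whence
\begin{equation*}
    K(\epsilon) \leq \frac{|\mathcal{D}|}{\xi\,\beta^{d-1}(h^\ast)^d} \leq \frac{2^d|\mathcal{D}|\,(C')^{d/N^{1/d}}}{\xi\,\beta^{d-1}}\,\epsilon^{-d/N^{1/d}},
\end{equation*}
which is the desired bound with $C = 2^d|\mathcal{D}|\,(C')^{d/N^{1/d}}/(\xi\,\beta^{d-1})$.

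The main obstacle I anticipate is the step transferring the lower bound on the longest side from iteration $L-1$ to the final partition. Proposition 4.1 only controls $\eta$ through the \emph{global} maximum side of the current partition, so in principle a round of splits after iteration $L-1$ could shrink the maximum arbitrarily and erase the lower bound we have obtained. The split-along-the-longest-edge rule from Remark \ref{rem geo} is precisely what saves the argument, by guaranteeing that a single split cannot reduce the longest side by more than a factor of two; the remaining manipulations are routine bookkeeping with the volume inequality and the quasi-uniformity constant $\xi$.
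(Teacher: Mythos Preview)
Your proposal is correct and follows essentially the same route as the paper: chain the error-estimator upper bound, C\'ea, the weak-greedy comparison, and Proposition~4.1 to get $\epsilon < C'(h_{L-1})^{N^{1/d}}$, then use the halving property of the split together with the volume inequality of Remark~\ref{rem geo} and Assumption~\ref{quasi} to bound $K(\epsilon)\,\underline{\delta}_L\le|\mathcal{D}|$. The only cosmetic difference is that the paper passes the factor-of-two loss through volumes ($\overline{\delta}_L\ge\tfrac12\overline{\delta}_{L-1}$) rather than through side lengths ($h^\ast\ge\tfrac12 h^{(L-1)}$), yielding a constant with a factor $2$ instead of your $2^d$, and closes with a contradiction rather than a direct inequality.
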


\begin{proof}
    The nonlinear RB approximation algorithm terminates if the error estimator over each subdomain satisfies the required tolerance $\epsilon$. If $L=1$ (hence $K=1$) the proof is complete. Otherwise, for $L>1$, $ 1\leq l \leq L-1$ and $1 \leq k \leq K_l$, we have
    \begin{equation}
        \epsilon < \hat\eta_l,
        \label{eps=1}
    \end{equation}
    where
    \begin{equation}
        \hat\eta_l:=\max\{\eta_{N,k}(\mu) : \mu \in \mathcal{D}_k, 1\leq k \leq K_l \}
    \end{equation}
    is the maximum a posteriori error estimate at the $l$-th iteration. We define 
    \begin{equation}
        h_l:=\max_{\substack{1 \leq k \leq K_l \\ 1 \leq j \leq d}}h_{k,j},
    \end{equation}
    and use the upper bound in \eqref{est} on the error estimator to get 
    \begin{align}
    \hat\eta_l &\leq \frac{\overline{\gamma}}{\underline{\alpha}} \max_{1 \leq k \leq K_l}\sup_{\mu \in \mathcal{D}_k} \left\|u_{\mathcal{N}}(\mu)-u_{N,k}(\mu)\right\|_V\\
    &\leq \frac{\overline{\gamma}^2}{\underline{\alpha}^2} \hat{\sigma}_l(\mathcal{M})\\
    &\leq \hat C \frac{\kappa ^{N+1}\overline{\gamma}^2}{{\sqrt{3}\underline{\alpha}^2}} (h_l)^{N^{1/d}} ,
    \label{eta_1}
    \end{align}
    where the second inequality comes from \eqref{cea} and the third inequality comes from \eqref{th comp} and \eqref{hat C}. From \eqref{eps=1}, we deduce that 
    \begin{equation}
        h_l > \left(\frac{\sqrt{3}\underline{\alpha}^2 \epsilon}{\hat C \kappa ^{N+1}\overline{\gamma}^2}\right)^{1/N^{1/d}}.
        \label{euc}
    \end{equation}

    Since the domain partition is quasi-uniform, we have
    \begin{equation}
        \underline{\delta}_L \geq \xi \overline{\delta}_L.
    \end{equation}
    Given that the intermediate subdomains are equally split, we obtain
    \begin{equation}
        \underline{\delta}_L \geq \frac{\xi}{2} \overline{\delta}_{L-1}.
    \end{equation}
    Using estimate \eqref{beta}, we get
    \begin{equation}
        \underline{\delta}_L \geq \frac{\xi \beta^{d-1}}{2} (h_{L-1})^d.
    \end{equation}
    Thus by (\ref{euc}), we have 
    \begin{equation}
        \underline{\delta}_L  > \frac{\xi \beta^{d-1} }{2} \left(\frac{\sqrt{3}\underline{\alpha}^2 \epsilon}{\hat C \kappa ^{N+1}\overline{\gamma}^2}\right)^{d/N^{1/d}}.
        \label{lower=1}
    \end{equation}
    
    Let $|\mathcal{D}|$ denote the volume of the parameter domain, then $K\underline{\delta}_L\leq |\mathcal{D}|$. Let 
    \begin{equation}
        C:=\frac{2|\mathcal{D}|}{\xi \beta^{d-1} } \left(\frac{\hat C \kappa ^{N+1}\overline{\gamma}^2}{\sqrt{3}\underline{\alpha}^2}\right)^{d/N^{1/d}},
        \label{const=1}
    \end{equation}
    and for the sake of contradiction, assume that $K>C/\epsilon^{d/N^{1/d}}$. From (\ref{const=1}) and (\ref{lower=1}), it follows that 
    \begin{equation}
        K\underline{\delta}_L>\frac{C}{\epsilon^{d/N^{1/d}}} \underline{\delta}_L > |\mathcal{D}|,
    \end{equation}
    which leads to a contradiction and completes the proof.
\end{proof}

\subsubsection{A model problem}
A frequently encountered model problem in the context of model order reduction is the linear elliptic partial differential equation
\begin{equation}
    \begin{aligned}
    -\operatorname{div} \big(a(\mu) \nabla u(\mu) \big)=f  \text { in } \Omega, \\
    u=0  \text { on } \partial \Omega,
    \label{the pde}
\end{aligned}
\end{equation}
where $f$ is a real-valued function, and the diffusion coefficient $a$ depends on a parameter 
$\mu$. The associated variational formulation reads: Given any $\mu \in \mathcal{D}$, find $u(\mu) \in V=H_0^1(\Omega)$ such that
\begin{equation}
    \int_\Omega a(\mu) \nabla u(\mu) \cdot \nabla v=\langle f, v\rangle, \quad v \in V,
    \label{var model}
\end{equation}
where $\langle \cdot, \cdot\rangle$ is the duality pairing between $V'$ and $V$. The diffusion coefficient $a$ is assumed to be affine in the parameter $\mu$, taking the form
\begin{equation}
   a:=a(\mu):= \bar{a}+\sum_{j=1}^d \mu_j \psi_j,
\end{equation}
where $\bar{a}\in L^{\infty}(\Omega)$ and  $\psi_j \in L^{\infty}(\Omega), 1\leq j \leq d$. According to the Lax–Milgram lemma, this problem is well-posed if $f \in V'$ and there exists a constant $r > 0$ such that the diffusion coefficient $a$ satisfies:
\begin{equation}
   \bar{a}(x)+\sum_{j=1}^d \mu_j \psi_j(x) \geq r, \quad  x\in \Omega,\ \mu \in \mathcal{D}.
    \label{UEA1}
\end{equation}
Relation \eqref{UEA1} is referred to as the uniform ellipticity assumption of constant $r$, or UEA$(r)$. Therefore, if the UEA(r) holds, the map $a \mapsto u(a)$ is well-defined over 
\begin{equation}
    a(\mathcal{D}):=\left\{a=\bar{a}+\sum_{j=1}^d \mu_j \psi_j: \mu \in \mathcal{D}\right\},
\end{equation}
and thus, the map $\mu \mapsto u(\mu)$ is well defined from $\mathcal{D}$ to $V$. Here, we adopt the slight abuse of notation $ u(\mu):=u(a(\mu))$.

In the case where $a$ is complex valued, we consider $V$ as a space of complex-valued functions. The variational formulation remains the same as in \eqref{var model}, with the integral on the left understood as the standard Hilbertian inner product, and $\langle f, v\rangle$ representing the anti-duality pairing between the complex spaces $V'$ and $V$. By the complex version of Lax-Milgram lemma, this problem admits a unique solution $u$ provided that
\begin{equation}
    \mathfrak{R}(a) \geq r, \quad x\in \Omega, \ \mu \in \mathcal{D},
\end{equation}
for some $r>0$. In that case, the solution satisfies the stability estimate 
\begin{equation}
        \left\|u\right\|_V  \leq \frac{\|f\|_{V'}}{r}.
\end{equation}
As a result, the map $a \mapsto u(a)$ corresponding to Problem \eqref{the pde} can be extended to the complex domain
\begin{equation}
    \mathcal{U}_r:=\{a \in X: \mathfrak{R}(a) \geq r\},
\end{equation}
with the uniform bound
\begin{equation}
    \sup \left\{\|u(a)\|_V:a \in \mathcal{U}_r\right\} \leq \frac{\|f\|_{V'}}{r} .
\end{equation}
Therefore, this extension is defined on the open set $\mathcal{U} :=\cup_{r>0} \mathcal{U}_r .$

For any $a\in X$, the sesquilinear form of the Problem \eqref{the pde} induces a bounded linear operator $\mathcal{B}(a) : V \mapsto W$, that is $\mathcal{B}(a): v \mapsto -\text{div}(a \nabla v)$, and the solution of the problem is $u(a):=\mathcal{B}(a)^{-1} f$. To see that the map $a \mapsto u(a)$ is holomorphic, we decompose it as follows.
\begin{equation}
    a \mapsto \mathcal{B}(a) \mapsto \mathcal{B}(a)^{-1} \mapsto \mathcal{B}(a)^{-1} f=u(a),
\end{equation}
where the first and third maps are continuous linear and therefore holomorphic. The second map is the operator inversion, which is holomorphic at any invertible $\mathcal{B} \in \mathcal{L}(V,{V'})$.

It is shown in \cite{cohen2015approximation} that if UEA($r$) holds, then there exists $\varepsilon>0$ such that for any vector $\tau=(\tau_{j})_{j=1}^d$, with each $\tau_j \geq 1$ and satisfying
\begin{equation}
    \sum_{j=1}^d (\tau_{j}-1) \|\psi_j\|_X \leq \varepsilon,
    \label{cons2}
\end{equation}
there exists an open set $\mathcal{O}$ containing the polydisc $\mathcal{P}_{\tau}$ (as defined in \eqref{poly vec}) for which the map $\mu \mapsto u(\mu)$ admits a holomorphic extension with the uniform bound
\begin{equation}
    \sup_{z \in \mathcal{O}} \|u_{\mathcal{N}}(z)\|_V \leq \frac{\|f\|_{V'}}{t}
\end{equation}
for some $t<r$. This result effectively ensures that Assumption \ref{assump para} holds. A possible choice for $\tau_j$ satisfying \eqref{cons2} is
\begin{equation}
    \tau_j=1+\frac{\varepsilon}{d\left\|\psi_j\right\|_{L^{\infty}(\Omega)}}, \quad 1\leq j \leq d.
\end{equation}

Let the parameter domain be partitioned into $K$ subdomains, each characterized by side lengths  $h_k=(h_{k,1},h_{k,2},...,h_{k,d})$, $1 \leq k \leq K$. A choice of $\rho_{k}=(\rho_{k,j})_{j=1}^d$ ensuring that the polydisc $\mathcal{P}_{\rho_k}$ contains the subdomain $\mathcal{D}_k$ and is contained in $\mathcal{O}$, is given by
\begin{equation}
    \rho_{k,j}=\frac{h_{k,j}}{2}+\frac{\varepsilon}{d\left\|\psi_j\right\|_{L^{\infty}(\Omega)}}, \quad 1\leq j \leq d.
\end{equation}
Under normalization \eqref{normalize}, $\rho_{k,j}$ becomes 
\begin{equation}
    \hat \rho_{k,j}=1+\frac{2\varepsilon}{dh_{k,j}\left\|\psi_j\right\|_{L^{\infty}(\Omega)}}, \quad 1\leq j \leq d.
\end{equation}
Accordingly, we may choose $\zeta_h$ in \eqref{rho} as
\begin{equation}
    \zeta_h=\min_{1\leq j \leq d}\left\{1+\frac{2\varepsilon}{hd\left\|\psi_j\right\|_{L^{\infty}(\Omega)}}\right\},
\end{equation}
where $h$ is defined in \eqref{h}.

\section{Numerical results}
We now present numerical experiments to validate the theoretical convergence result established in Theorem~\ref{th conv}. We consider two parametric PDEs: a diffusion equation and a convection–diffusion equation. For each problem, we first apply the standard linear RB method, followed by the proposed nonlinear approach. We then compare the effectiveness of our method to the one presented in \cite{eftang2010hp}.
\begin{exmp}
\label{model prob}
For our first numerical example, we consider the model problem \eqref{the pde}, where $V=H_0^1(\Omega)$, and the physical domain is defined as
\begin{equation*}
    \Omega = \{(x, y) \in \mathbb{R}^2 : 0< x < 1, \ 0 < y < 1 \}
\end{equation*}
with boundary $\partial \Omega$. We take the right hand side as $f=1$. We let $\mu=(\mu_1,\mu_2) \in \mathcal{D}$, where $\mathcal{D}=[-1,1]^2$ (corresponding to the case $d=2$) and define the diffusion coefficient by
\begin{equation}
    a=1+\frac{\cos(2\pi x)+\cos(2\pi y)}{2\alpha\pi^2}\mu_1+\frac{\cos(4\pi x)+\cos(4\pi y)}{8\pi^2}\mu_2,
\end{equation}
where $\alpha =0.105$ ensuring the coercivity of the problem. The varitional formulation of the problem is given in \eqref{var model}.
\end{exmp}
Next, we introduce a standard finite element space $V_{\mathcal{N}}\subset V$ consisting of $3806$ linear triangular elements. The corresponding discrete problem reads: Given any $\mu \in \mathcal{D}$, find $u_{\mathcal{N}}(\mu) \in V_{\mathcal{N}}$ such that
\begin{equation}
    \int_\Omega a(\mu) \nabla u_{\mathcal{N}}(\mu) \cdot \nabla v=\int_\Omega v, \quad v \in V_{\mathcal{N}}.
    \label{fe model}
\end{equation}

In Figure~\ref{snaps_model}, high-fidelity solutions are shown for two different parameter values, illustrating how the structure of the solution changes as the parameters vary.
\begin{figure}[h]
    \centering
    \begin{subfigure}[h]{.48\textwidth}
    \includegraphics[width=.98\textwidth,height=.8\textwidth]{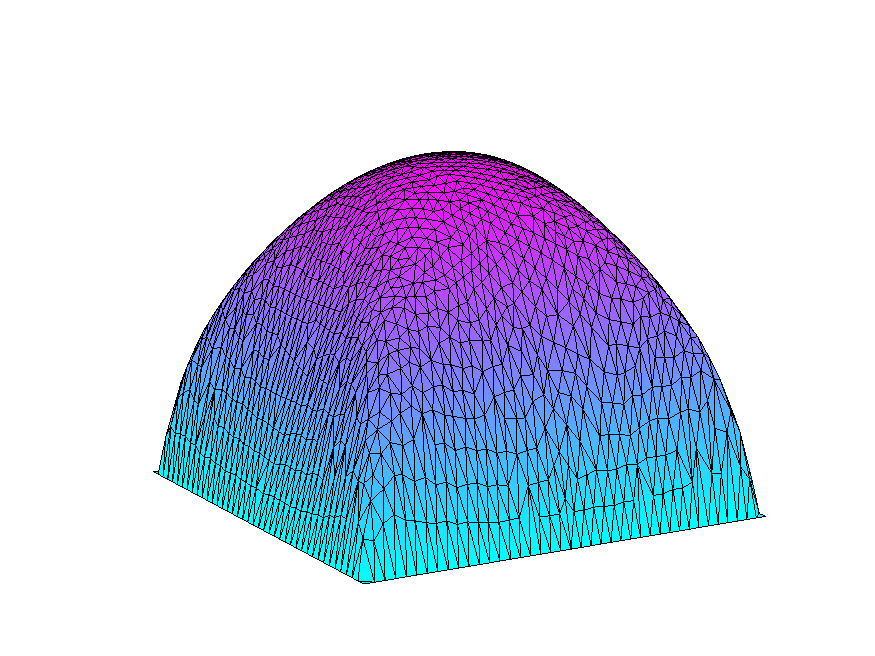}
    \caption{}
    \end{subfigure}
    \hfill
    \begin{subfigure}[h]{.48\textwidth}
    \includegraphics[width=.98\textwidth,height=.8\textwidth]{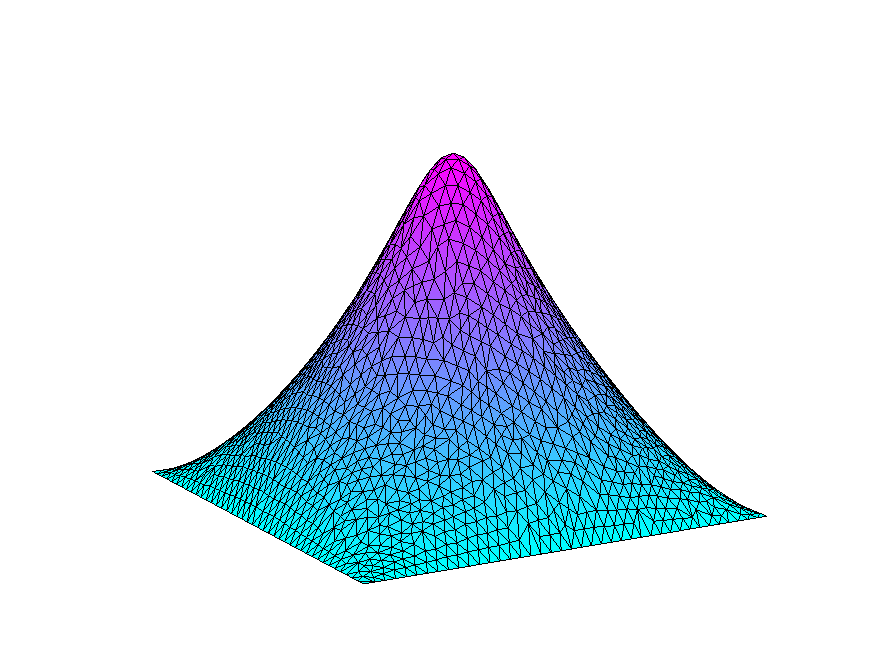}
    \caption{}
    \end{subfigure}
    \caption{Solutions of Problem \eqref{fe model} for different parameter values: (a) $\mu=(-1,1)$, and (b) $\mu=(1,1)$.}
    \label{snaps_model}
\end{figure}

We introduce a uniformly distributed random training set $E \subset \mathcal{D}$ consisting of $10^4$ parameters. Figure~\ref{greed_selec} shows the parameters chosen by the greedy algorithm. Most of these parameters have $\mu_1$ values close to $-1$ or $1$, where the problem approaches noncoerciveness. In Figure~\ref{linear_model}, we plot the maximum $V$-norm error bound, $\eta_{\max} = \max_{\mu \in E} \eta_N(\mu)$, over the corresponding training set, as a function of $N$ showing the relatively slow convergence for the chosen value of $\alpha$.

\begin{figure}[h]
    \centering
    \begin{subfigure}[h]{.48\textwidth}
    \includegraphics[width=.98\textwidth,height=.8\textwidth]{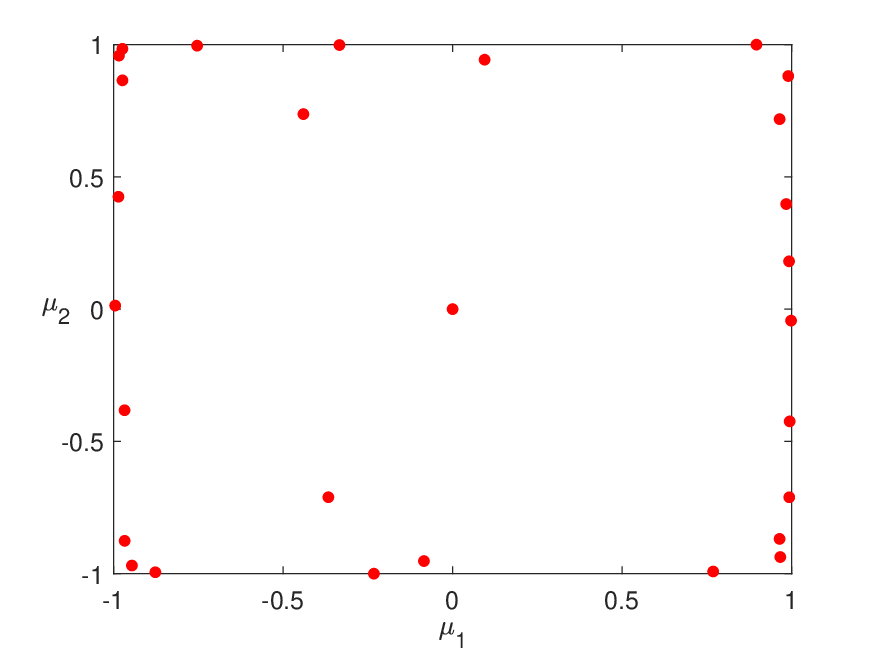}
    \caption{}
    \label{greed_selec}
    \end{subfigure}
    \begin{subfigure}[h]{.48\textwidth}
    \includegraphics[width=.98\textwidth,height=.8\textwidth]{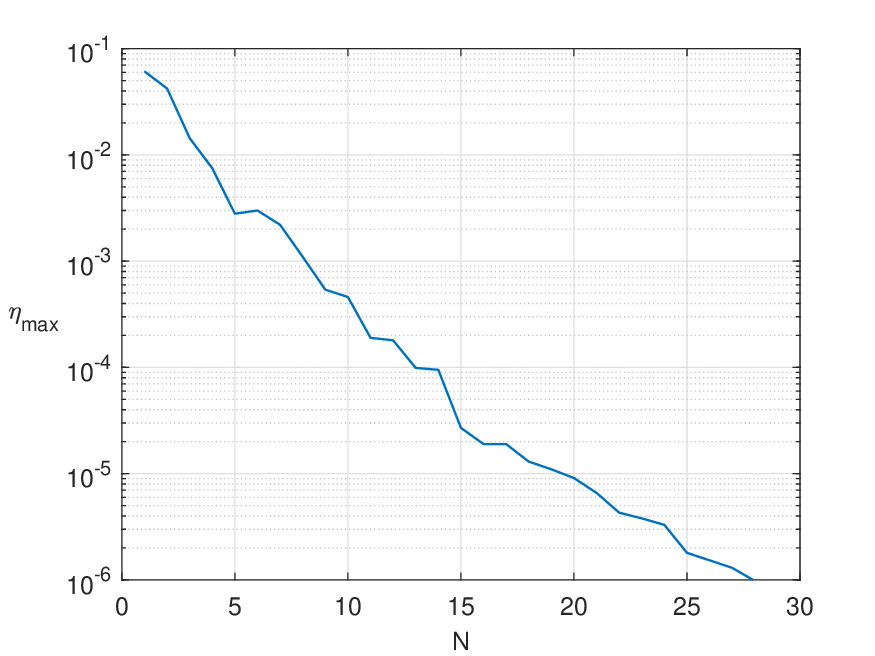}
    \caption{}
    \label{linear_model}
    \end{subfigure}
    \caption{(a) Greedy parameter selection. (b)Linear RB convergence results.}
\end{figure} 

We now present the convergence results for our proposed algorithm. We begin by specifying the RB space dimension $N$, the size of the training set, and the initial parameter. The proposed algorithm is executed for various values of the tolerance $\epsilon$, and we plot the number of subdomains $K$ as a function of $\epsilon$.

We compare the performance of our algorithm, referred to as Algorithm $1$, to the algorithm introduced in \cite{eftang2010hp}, referred to as Algorithm $2$. The algorithm in \cite{eftang2010hp} employs a different domain-splitting strategy, specifically based on a proximity function. If the maximum error estimator within a subdomain exceeds the tolerance, the subdomain is split into two parts. One subdomain retains the anchor parameter of the original subdomain, while the anchor parameter of the second is selected via the greedy algorithm. The partitioning is then determined based on proximity to the respective anchor points. Here, we adopt the Euclidean distance as our proximity function, consistent with the numerical results provided in \cite{eftang2010hp}.

We initialize the algorithm with the parameter value $(0,0)$ and set the size of the randomly sampled training set to $10^3$. In Figure~\ref{comp_model}, we present a comparison of the two algorithms by plotting the number of subdomains $K$ as a function of the tolerance $\epsilon$, considering three different cases: $N = 1$, $2$, and $4$. The observed convergence rates are in good agreement with the theoretical predictions established in Theorem~\ref{th conv} for the first two cases. For $N=4$, a slight superconvergence is observed. This could be attributed to the problem structure, which drives the greedy algorithm to select parameters clustered near the boundaries of the parameter domain, as shown in Figure~\ref{greed_selec}, and leads to a non–quasi-uniform domain partition, as illustrated in Figure~\ref{dom_partition}. Notably, both algorithms exhibit comparable performance in terms of accuracy.

\begin{figure}[h]
    \centering
    \includegraphics[width=.6\textwidth,height=.5\textwidth]{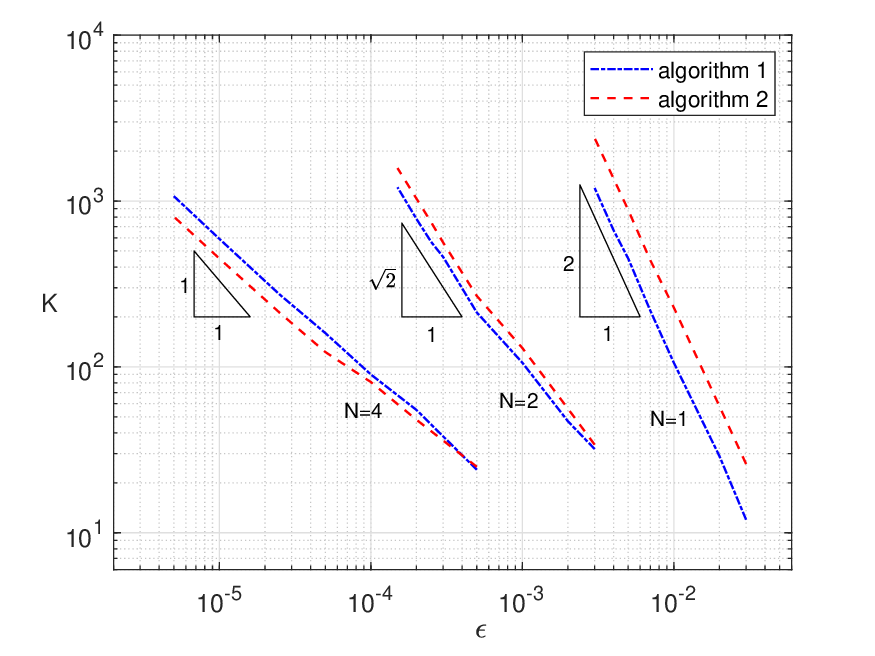}
    \caption{Convergence results for the two algorithms at $N=1,2,$ and $4$.}
    \label{comp_model}
\end{figure}

In Figure~\ref{dom_partition}, we present the partition of the parameter domain $\mathcal{D}$ for the two algorithms in the case $N = 4$ and $\epsilon = 5\times10^{-5}$. Clearly, the partition resulting from the proposed algorithm is much simpler than that produced by Algorithm 2. Consequently, the online storage required for the parameter domain partition is significantly smaller than that of the other algorithm.

\begin{figure}[h]
    \centering
    \begin{subfigure}[h]{.48\textwidth}
    \includegraphics[width=\textwidth,height=.8\textwidth ]{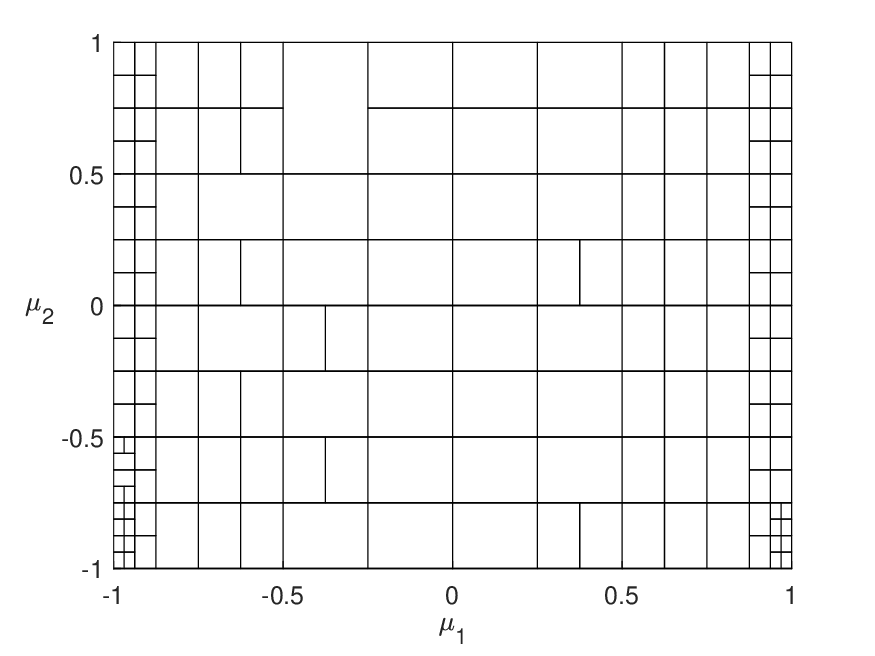}
    \caption{}
    \end{subfigure}
    \begin{subfigure}[h]{.48\textwidth}
    \includegraphics[width=\textwidth,height=.8\textwidth ]{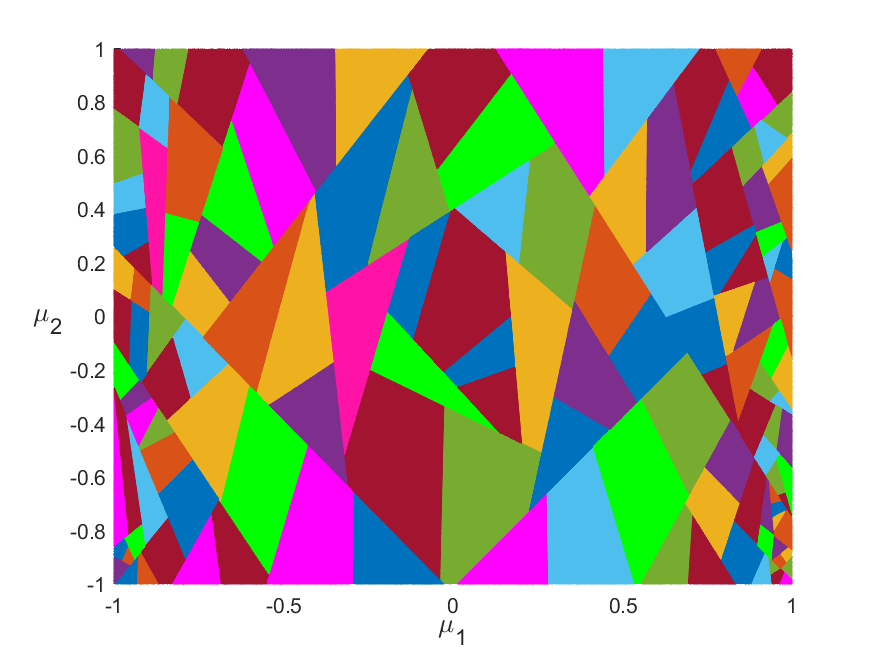}
    \caption{}
    \end{subfigure}
    \caption{Final parameter domain partition when $N=4$ and $\epsilon=5\times10^{-5}$ for (a) the proposed algorithm (b) the algorithm from \cite{eftang2010hp}.}
    \label{dom_partition}
\end{figure}

\begin{exmp}
\label{transport}
We consider the steady convection-diffusion model problem from \cite{eftang2010hp}. This problem is parametrized by the angle and magnitude of a prescribed velocity field. Let $\mu = (\mu_1, \mu_2)$ and define the velocity field as:
\begin{equation}
V(\mu) = [\mu_2 \cos \mu_1, \mu_2 \sin \mu_1]^T.
\end{equation}
The governing equations for the field variable $u(\mu)$ are:
\begin{equation}
-\Delta u(\mu) + V(\mu) \cdot \nabla u(\mu) = 10 \quad \text{in} \, \Omega,
\end{equation}
\begin{equation}
u(\mu) = 0 \quad \text{on} \, \partial \Omega,
\end{equation}
where the physical domain is defined as $\Omega = \{(x, y) \in \mathbb{R}^2 : x^2 + y^2 \leq 2\}$, and $\partial \Omega$ is the boundary of $\Omega$.
The function space associated with the given boundary condition is given by $ V := H_0^1(\Omega)$.

The weak parametrized formulation then reads: Given any $\mu \in \mathcal{D}$, find $u \in V$ such that
\begin{equation}
a(u(\mu), v; \mu) = f(v), \quad \forall v \in V,
\end{equation}
with 
\begin{align}
a(w, v; \mu) &= \int_{\Omega} \nabla w \cdot \nabla v   + \int_{\Omega} (V(\mu) \cdot \nabla w) v\\
&= \int_{\Omega} \nabla w \cdot \nabla v + \mu_2 \cos \mu_1 \int_{\Omega} \frac{\partial w}{\partial x} v + \mu_2 \sin \mu_1 \int_{\Omega} \frac{\partial w}{\partial y} v ,
\end{align}
and 
\begin{equation}
f(v) = f(v; \mu) = 10 \int_{\Omega} v ,
\end{equation}
for all $w, v \in V$.
\end{exmp}

Next, we introduce a standard finite element space $V_{\mathcal{N}}\subset V$ consisting of linear elements. The underlying mesh consists of $3689$ linear elements. Following the Galerkin approach, we obtain the following discrete problem: Given any $\mu \in \mathcal{D}$, find $u_{\mathcal{N}}(\mu) \in V_{\mathcal{N}}$ such that
\begin{equation}
a(u_{\mathcal{N}}(\mu), v; \mu) = f(v), \quad \forall v \in V_{\mathcal{N}}.
\label{FE prob}
\end{equation}

\begin{figure}[h]
    \centering
    \begin{subfigure}[h]{.32\textwidth}
    \includegraphics[width=\textwidth,height=\textwidth]{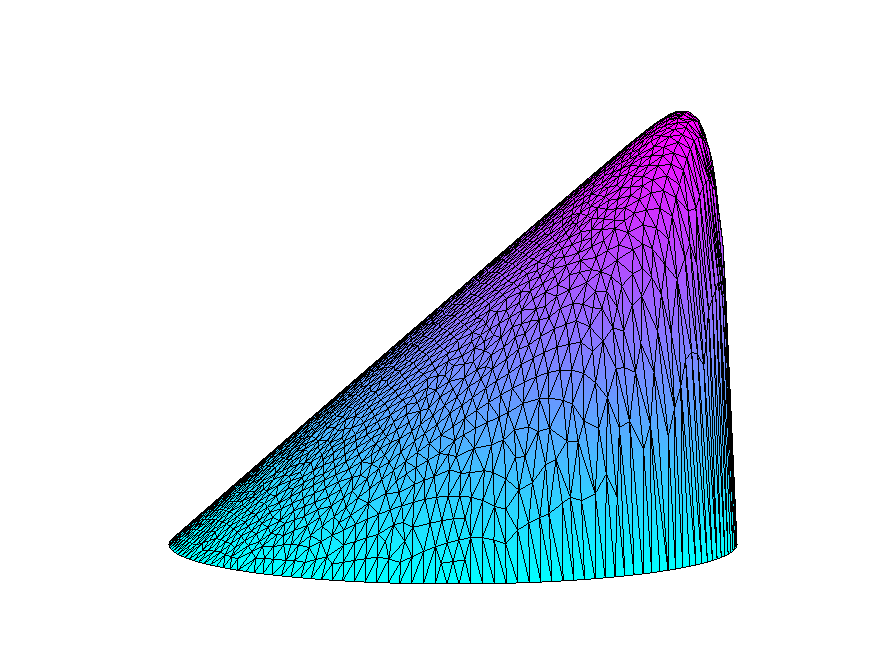}
    \caption{}
    \label{sol1}
    \end{subfigure}
    \hfill
    \begin{subfigure}[h]{.32\textwidth}
    \includegraphics[width=.98\textwidth,height=\textwidth]{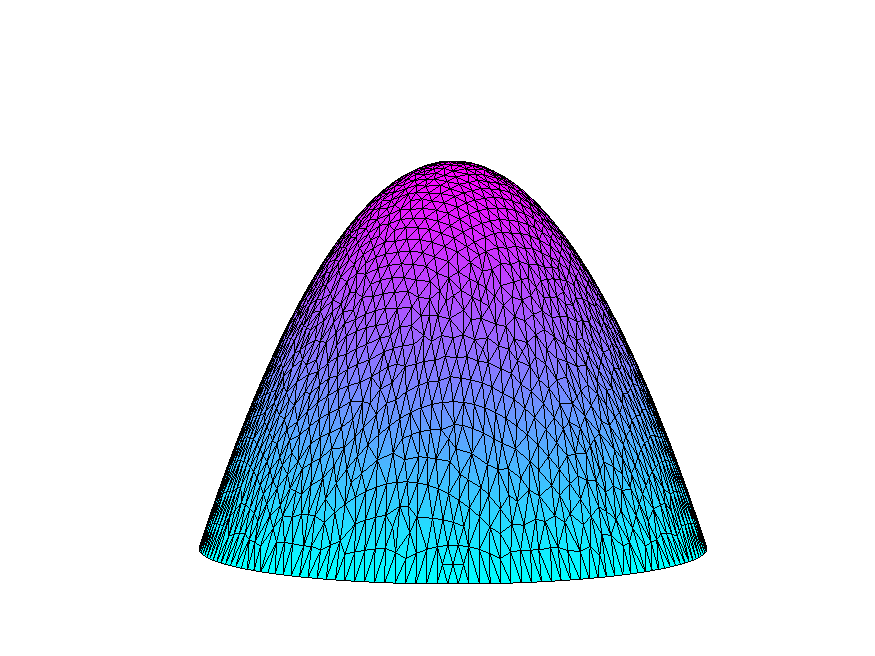}
    \label{sol2}
    \caption{}
    \end{subfigure}
    \hfill
    \begin{subfigure}[h]{.32\textwidth}
    \includegraphics[width=.98\textwidth,height=\textwidth]{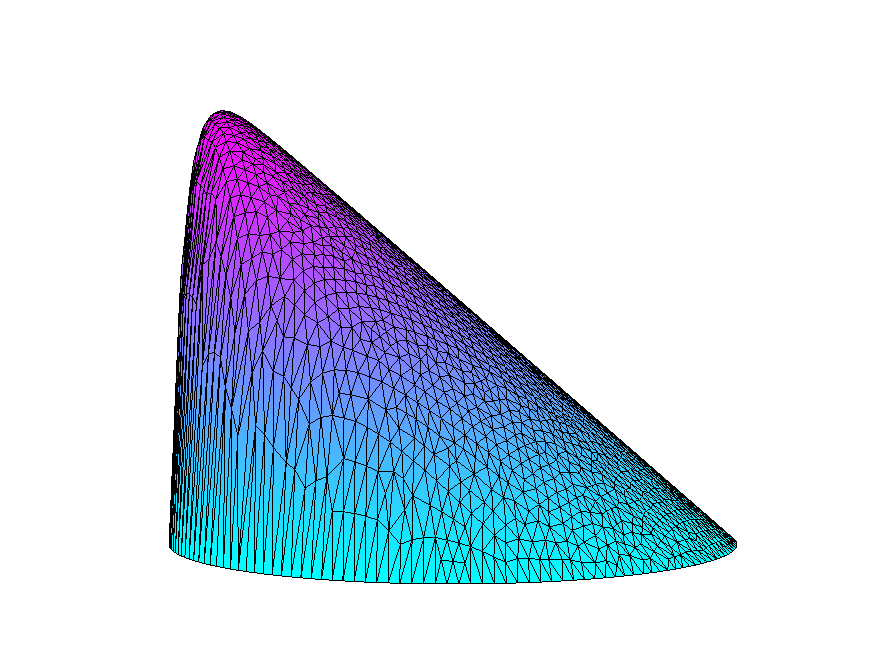}
    \label{sol3}
    \caption{}
    \end{subfigure}
    \caption{Solutions of Problem \eqref{FE prob} for different parameter values: (a) $\mu=(0,10)$, (b) $\mu=(0,0)$ and (c) $\mu=(\pi,10)$.}
    \label{sols}
\end{figure}
In Figure~\ref{sols}, three representative solutions are shown for different parameter values. It is evident that the solutions exhibit significantly different structures, which poses a challenge for linear reduced basis (RB) methods. 

We define three parameter domains
\begin{equation}
\mathcal{D}_{\mathrm{I}} := \{0\} \times [0, 10], \quad \mathcal{D}_{\mathrm{II}} := [0, \pi] \times \{10\}, \quad \mathcal{D}_{\mathrm{III}} := [0, \pi] \times [0, 10];
\end{equation}
we shall thus consider two cases:  $d = 1$ ($D_{\mathrm{I}}$  and  $D_{\mathrm{II}}$) or  $d = 2 $ ($D_{\mathrm{III}} $).

\begin{figure}[h]
    \centering
    \begin{subfigure}[h]{.48\textwidth}
    \includegraphics[width=.98\textwidth,height=.8\textwidth]{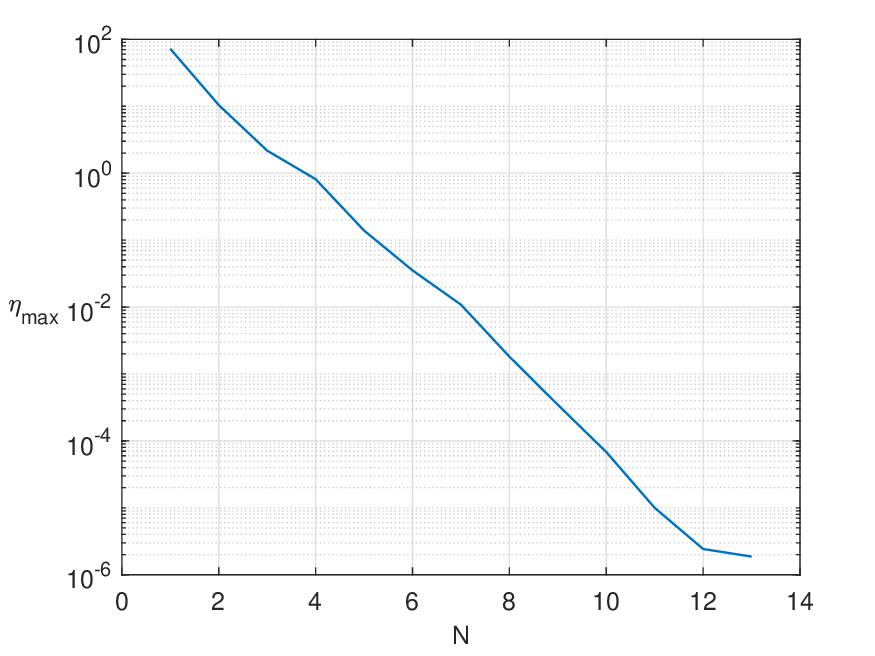}
    \caption{}
    \label{1d_i}
    \end{subfigure}
    \begin{subfigure}[h]{.48\textwidth}
    \includegraphics[width=.98\textwidth,height=.8\textwidth]{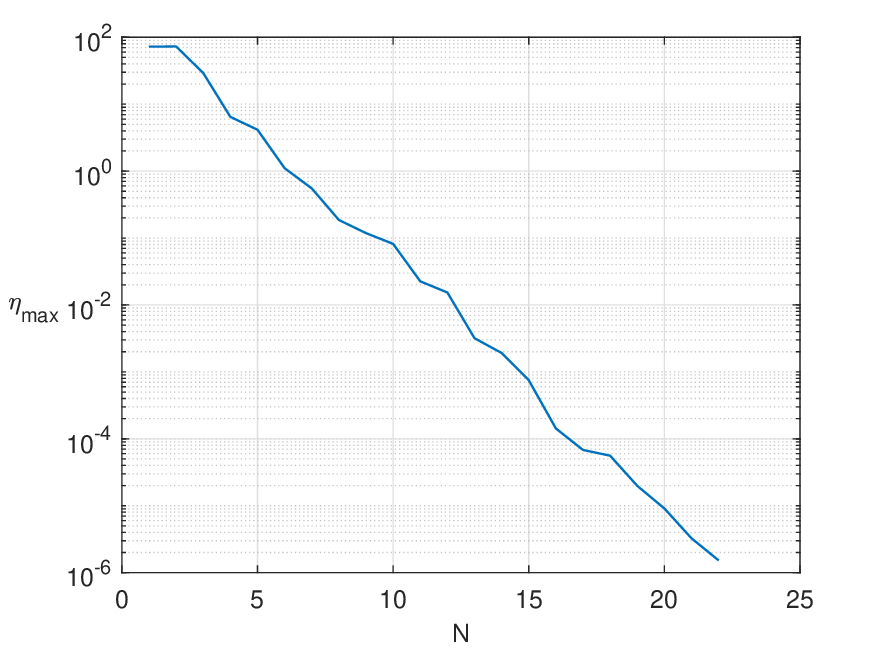}
    \label{1d_ii}
    \caption{}
    \end{subfigure}
    \caption{Linear RB convergence results: (a) one-parameter case $D_{\mathrm{I}}$. (b) one-parameter case $D_{\mathrm{II}}$.}
    \label{1d_linear}
\end{figure}

We introduce uniformly distributed random training sets $E_{\mathrm{I}} \subset \mathcal{D}_{\mathrm{I}}$, $E_{\mathrm{II}} \subset \mathcal{D}_{\mathrm{II}}$, and $E_{\mathrm{III}} \subset \mathcal{D}_{\mathrm{III}}$, of sizes $10^3$, $10^3$, and $10^4$, respectively. In Figure~\ref{1d_linear}, we plot $\eta_{\max}$ as a function of $N$ for the two one-parameter cases, $\mathcal{D}_{\mathrm{I}}$ and $\mathcal{D}_{\mathrm{II}}$. We observe that a very high accuracy can be achieved with a relatively small value of $N$ especially for the case $\mathcal{D}_{\mathrm{I}}$.

The parameters selected by the greedy algorithm are displayed in Figure~\ref{greed_selec_2}. Unlike in the previous problem, the resulting distribution is approximately uniform, reflecting the varying structure of the solution across the entire parameter domain. In Figure~\ref{linear_model_2}, we plot $\eta_{\max}$ versus $N$ for the two-parameter case $\mathcal{D}_{\mathrm{III}}$. The linear RB method fails to effectively approximate the solution with a small basis size. This is due to the significantly different solution structures corresponding to different parameter values, as illustrated in Figure~\ref{sols}. Consequently, the nonlinear approach is needed to effectively handle this problem.

\begin{figure}[h]
    \centering
    \begin{subfigure}[h]{.48\textwidth}
    \includegraphics[width=.98\textwidth,height=.8\textwidth]{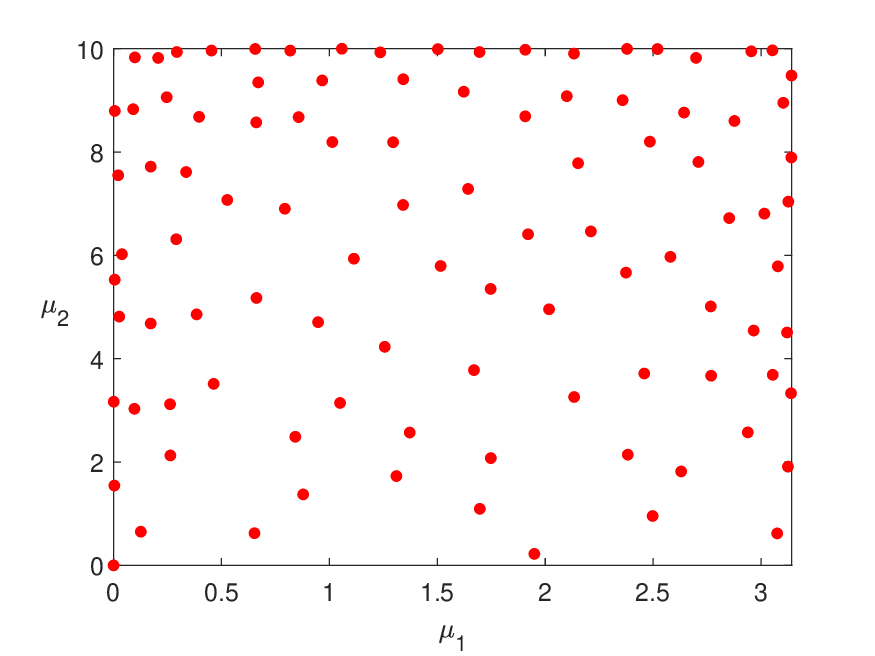}
    \caption{}
    \label{greed_selec_2}
    \end{subfigure}
    \begin{subfigure}[h]{.48\textwidth}
    \includegraphics[width=.98\textwidth,height=.8\textwidth]{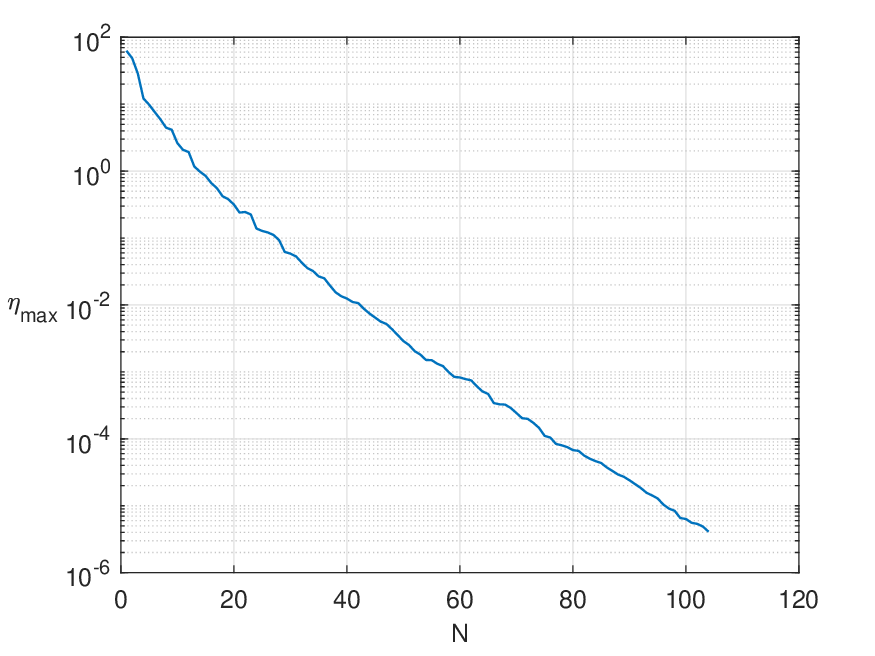}
    \caption{}
    \label{linear_model_2}
    \end{subfigure}
    \caption{(a) Greedy parameter selection for $D_{\mathrm{III}}$. (b) Linear RB convergence results for $D_{\mathrm{III}}$.}
\end{figure} 

Next, we present the convergence results of our algorithm in comparison with those of the algorithm described in \cite{eftang2010hp}.  We begin with the two one-parameter cases, $\mathcal{D}=\mathcal{D}_{\mathrm{I}}$ and $\mathcal{D}=\mathcal{D}_{\mathrm{II}}$. We choose $(0,0)$ as the initial parameter for the case $\mathcal{D} = \mathcal{D}_{\mathrm{I}}$ and $(0,10)$ for the case $\mathcal{D} = \mathcal{D}_{\mathrm{II}}$. The training set consists of $10^2$ randomly sampled points. In Figure~\ref{comp_1d}, we plot the number of subdomains $K$ against the tolerance $\epsilon$ for the two algorithms, considering three different cases: $N = 1, 2,$ and $3$. The observed convergence rates align well with the theoretical results presented in Theorem~\ref{th conv}. We observe that both algorithms yield comparable results, with our proposed algorithm demonstrating slightly superior performance in the case $N = 1$.

\begin{figure}[h]
    \centering
    \begin{subfigure}[h]{.49\textwidth}
    \includegraphics[width=\textwidth,height=.8\textwidth ]{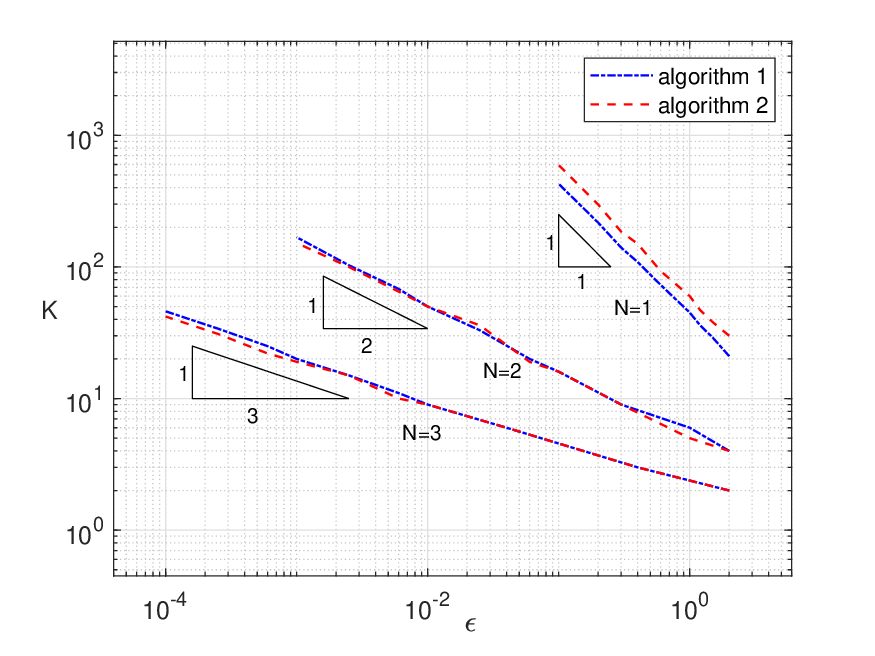}
    \caption{}
    \end{subfigure}
    \begin{subfigure}[h]{.49\textwidth}
    \includegraphics[width=\textwidth,height=.8\textwidth ]{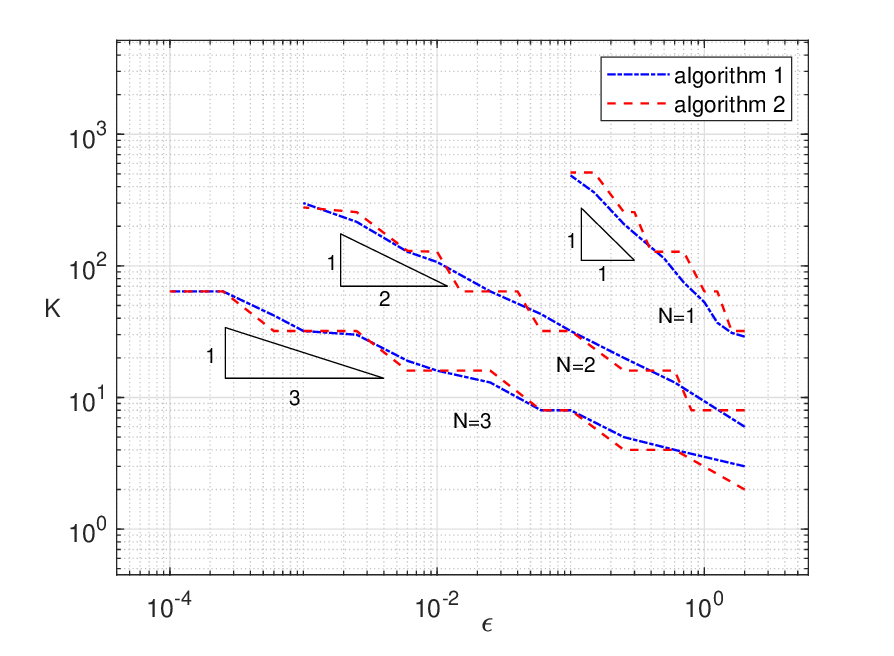}
    \caption{}
    \end{subfigure}
    \caption{Convergence results for the two algorithms at $N=1,2,$ and $3$ for the one parameter cases (a) $\mathcal{D}=\mathcal{D}_{\mathrm{I}}$, (b) $\mathcal{D}=\mathcal{D}_{\mathrm{II}}$.}
    \label{comp_1d}
\end{figure}

For the two-parameter case $\mathcal{D} = \mathcal{D}_{\mathrm{III}}$, the initial parameter is $(0,0)$, and the training set consists of $10^3$ randomly sampled points. Convergence results for different RB space dimensions, $N = 1, 4,$ and $16$, are presented in Figure~\ref{comp_2d}. Once again, the observed convergence rates are consistent with Theorem~\ref{th conv}. The results indicate that our proposed algorithm (Algorithm $1$) achieves better accuracy in certain cases, specifically when $N = 1$ and $N = 16$. On the other hand, Algorithm $2$ shows slightly better performance in the case $N = 4$.
\begin{figure}[h]
    \centering
    \includegraphics[width=.75\textwidth,height=.5\textwidth]{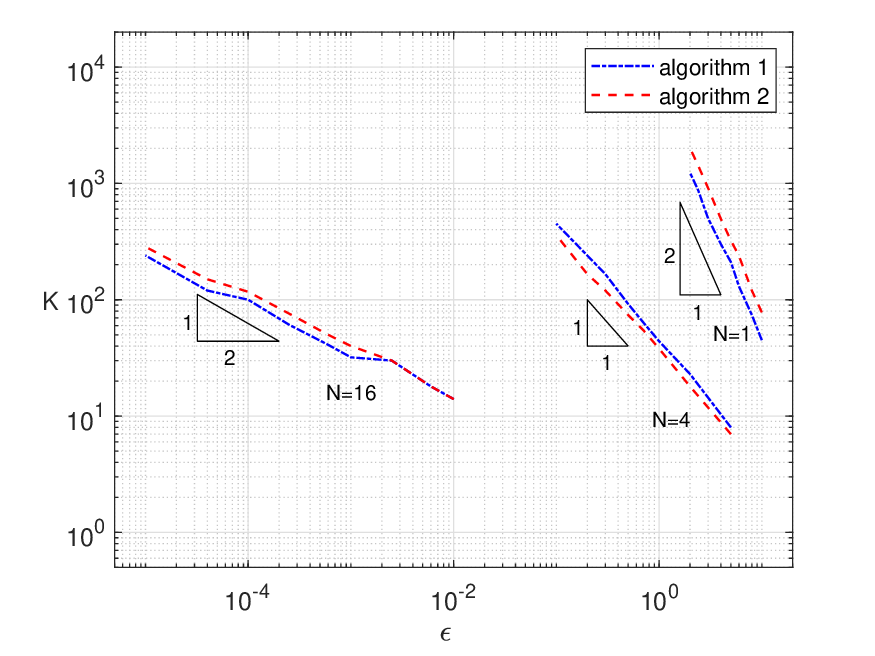}
    \caption{Convergence results for the two algorithms at $N=1,4,$ and $16$ for the two parameter case $D_{\mathrm{III}}$}
    \label{comp_2d}
\end{figure}

Figure~\ref{comp_partit} illustrates the partition of the parameter domain $\mathcal{D}$ for the two algorithms, here in the case $\mathcal{D} = D_{\mathrm{III}}$ with $N = 4$ and $\epsilon = 0.3$. Once again, the proposed algorithm yields a particularly simple partitioning structure, which translates into reduced online storage requirements.

\begin{figure}[h]
    \centering
    \begin{subfigure}[h]{.48\textwidth}
    \includegraphics[width=\textwidth,height=.8\textwidth ]{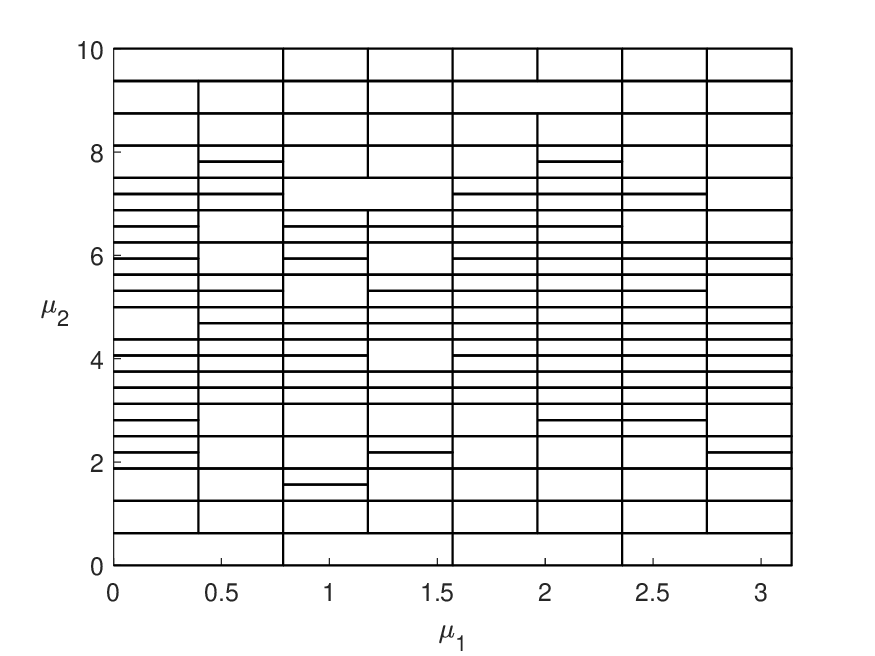}
    \caption{}
    \end{subfigure}
    \begin{subfigure}[h]{.48\textwidth}
    \includegraphics[width=\textwidth,height=.8\textwidth ]{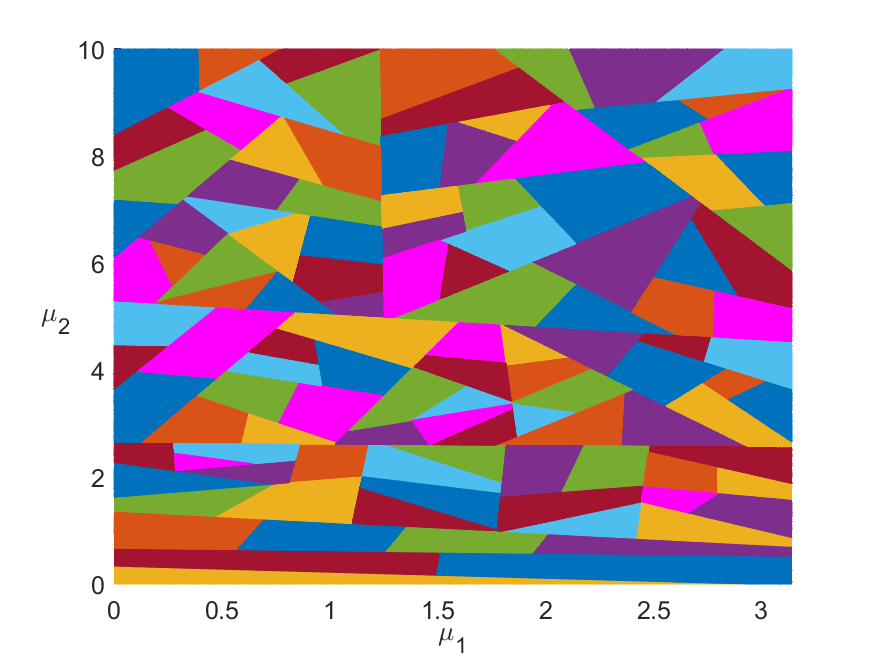}
    \caption{}
    \end{subfigure}
    \caption{Final parameter domain partition for the case $D_{\mathrm{III}}$ with $N=4$ and $\epsilon=0.3$ for (a) the proposed algorithm (b) the algorithm from \cite{eftang2010hp}.}
    \label{comp_partit}
\end{figure}

Compared to the algorithm presented in \cite{eftang2010hp}, the proposed algorithm offers several advantages. First, the parameter domain partition resulting from our method consists of subdomains with a tensor product structure. This structure enables the use of explicit volume formulas for each subdomain, which in turn allows for the development of rigorous convergence results. Moreover, the tensor product structure significantly reduces the online storage requirements, since the storage depends only on the bounding points of each subdomain. In contrast, the partition in \cite{eftang2010hp}, which is based on a proximity function, offers no control over the shape of the subdomains, leading to substantially higher storage needs. Furthermore, the proposed algorithm can be applied to a pre-partitioned parameter domain where the subdomains already exhibit a tensor product structure. This feature drastically reduces the computational cost of the offline stage required to achieve a given tolerance. Finally, with regard to accuracy, our algorithm achieves better results in terms of the number of subdomains needed to reach a prescribed tolerance in several instances, as demonstrated by the numerical results.

\section{Conclusion}

In this work, we have developed and analyzed a nonlinear RB method based on a binary-tree partitioning of the parameter domain into tensor-product-structured subdomains. Each subdomain is associated with its own local linear RB space, with dimension bounded by a prescribed value. The proposed splitting criterion, which always acts along the longest parameter direction, allows us to rigorously control subdomain geometry and derive explicit convergence bounds.

Our theoretical results establish convergence rates for the general case of arbitrary parameter domain dimension $d$ and RB space size $N$. In particular, we proved that, under some regularity and quasi-uniformity assumptions, the number of subdomains required to achieve a given tolerance $\epsilon$ scales as
\[
K(\epsilon) = \mathcal{O}\left( \epsilon^{-d/N^{1/d}} \right).
\]

Comprehensive numerical experiments on diffusion and convection-diffusion model problems confirm the theoretical predictions. The proposed method consistently exhibits the expected convergence rates and, in several cases, outperforms the nonlinear RB approach introduced in \cite{eftang2010hp} in terms of the number of subdomains needed for a prescribed accuracy. Furthermore, the tensor-product partition structure not only has a simple description which has low storage requirements but also allows for reducing computational cost by enabling the use of pre-partitioned parameter domains.

Future research directions include extending the framework to time-dependent and nonlinear PDEs, as well as refining the theoretical analysis to account for the adaptive nature of the method without restricting the partition to the quasi-uniform case.

\bibliographystyle{acm}
\bibliography{ref}

\end{document}